\documentclass [11pt]{amsart}
\usepackage{geometry,braket}
\newgeometry{margin=1.35in}
\usepackage[utf8]{inputenc}
\usepackage{amsfonts, amsmath, amssymb, amsthm,mathtools, stmaryrd, blkarray,bm, bbm, ytableau}
\usepackage{verbatim}
\usepackage[normalem]{ulem}
\usepackage{tikz-cd}
\usepackage{enumitem}
\usepackage{esvect}
\usepackage{float}

%
%\openup1.1\jot
%\setlength{\topmargin}{0.1\topmargin}
%\setlength{\oddsidemargin}{0.0\oddsidemargin}
%\setlength{\evensidemargin}{0.0\oddsidemargin}
%\setlength{\textheight}{1.02\textheight}
%\setlength{\textwidth}{1.25\textwidth}

%\usepackage[table]{xcolor}
\usepackage{array}

%\parindent=0cm
%\usepackage{graphicx}
%\hoffset=-1.0cm \textheight=22cm \textwidth=15.5cm

%\makeatletter
%\def\author@andify{%
	%  \nxandlist {\unskip ,\penalty-1 \space\ignorespaces}%
	%    {\unskip {} \@@and~}%
	%    {\unskip ,\penalty-2 \space }%
	%}
%\makeatother

%\usepackage[margin=4.1cm]{geometry}

%\setlength{\textwidth}{5.9in}
%\setlength{\textwidth}{5.4in}
%\setlength{\textheight}{8.3in}
%\setlength{\oddsidemargin}{.35in}
%\setlength{\evensidemargin}{.35in}

\usepackage{hyperref}

\theoremstyle{theorem}
\newtheorem{theorem}{Theorem}[section]
\newtheorem{corollary}[theorem]{Corollary}
\newtheorem{lemma}[theorem]{Lemma}
\newtheorem{proposition}[theorem]{Proposition}

\numberwithin{equation}{section}

%\newtheorem{question}[thm]{Question}
  % Conjecture labeled by letters
%\newtheorem{conj}[conjecture]{Conjecture}  % Conjecture labeled by letters
%\renewcommand*{\theconjecture}{\Alph{conjecture}}

\theoremstyle{definition}
\newtheorem{definition}[theorem]{Definition}

\newtheorem{remark}[theorem]{Remark}

\theoremstyle{remark}
%Textify commands:

\newcommand{\BC}{{\mathbb{C}}}

\newcommand{\BN}{{\mathbb{N}}}

\newcommand{\BP}{{\mathbb{P}}}
\newcommand{\BQ}{{\mathbb{Q}}}
\newcommand{\BR}{{\mathbb{R}}}

\newcommand{\BZ}{{\mathbb{Z}}}

\newcommand{\CA}{{\mathcal A}}

\newcommand{\CE}{{\mathcal E}}
\newcommand{\CF}{{\mathcal F}}

\newcommand{\CM}{{\mathcal M}}

\newcommand{\CO}{{\mathcal O}}

\newcommand{\CX}{{\mathcal X}}

\newcommand{\CZ}{{\mathcal Z}}

\newcommand{\Fz}{{\mathfrak{z}}}

\DeclareFontFamily{OT1}{rsfs}{}
\DeclareFontShape{OT1}{rsfs}{n}{it}{<-> rsfs10}{}
\DeclareMathAlphabet{\curly}{OT1}{rsfs}{n}{it}

\newcommand{\Aut}{\operatorname{Aut}}
\newcommand{\p}{\mathbb{P}}

\newcommand{\Mbar}{{\overline M}}

\newcommand{\GW}{\mathsf{GW}}

\newcommand\ev{\operatorname{ev}}

\allowdisplaybreaks
\begin{document}
	\baselineskip=14.5pt
	
	\title{Admissible covers and stable maps}

	\author{Denis Nesterov, Maximilian Schimpf,\\
		An appendix by Johannes Schmitt}

	\begin{abstract}
		Moduli spaces of admissible covers  and stable maps of target curves give rise to cycles on $\Mbar_{g,n}$. We prove a formula relating these cycles. It recovers both the Ekedahl--Lando--Shapiro--Vainshtein formula and the Gromov--Witten/Hurwitz correspondence, providing a cycle-theoretic refinement thereof.  The formula is verified computationally in low-genus cases with the help of Johannes Schmitt. 
	\end{abstract}

	\maketitle
	
	\setcounter{tocdepth}{1} 
	\tableofcontents
	
	\section{Introduction}
	
	\subsection{Results} Fix an integer $d \in \BN$ and a vector of partitions $\underline{\eta}=(\eta^1,\hdots, \eta^k)$ of $d$.  Hurwitz cycles are associated to the loci of  stable curves admitting a ramified  cover  to a genus $h$  curve with the ramification profile prescribed by $\underline{\eta}$, 
	\[ 
	\mathsf{H}_{ \underline{\eta}} \in H^*(\Mbar_{g_0, \sum \ell(\eta^i)}),
	\] 
	such that $g_0$ is determined by the Riemann--Hurwitz formula applied to $h$ and $\underline{\eta}$, while ramification points are marked. For singular curves, we consider admissible covers of Harris and Mumford \cite{HM82}, which provide irreducible compactifications of moduli spaces of ramified covers to smooth targets. 
	
	Gromov--Witten cycles can be viewed as associated to the loci of genus $g$ curves admitting a degree $d$ stable map\footnote{A map is stable, if it is fixed by at most finitely many automorphisms of target and source curves.} to a genus $h$ curve, 
	\[ 
	\mathsf{GW}_{g} \in H^*(\Mbar_{g}),
	\vspace{-0.05cm}\] 
	defined more precisely as pushforwards of virtual fundamental classes of moduli spaces of stable maps introduced by Kontsevich \cite{Kon}. Both Hurwitz and Gromov--Witten cycles admit natural generalisations  by introducing marked points, relative points,  capping with various classes, etc; this is discussed in detail in Section \ref{sectionCycle}.

	We establish  a formula,  Theorem \ref{maintheorem},  that expresses $\mathsf{GW}_{g}$ in terms of 	$\mathsf{H}_{ \underline{\eta}}$ for different $\underline{\eta}$ and certain correction classes that account for degeneracies of stable maps, i.e., contracted components and ramifications. 
	These correction classes are given by expressions involving $\lambda$- and $\psi$-classes  that appear in the localisation formula on $\p^1$, and referred to as $I$-functions, Definition \ref{defnI}. Our formula holds in the most general setting. 
	Applications of the formula considered in this work include: 
	
	\begin{itemize}
		\item[(1)] A more effective computation of  Torelli pullbacks of $[\CA_1\times \CA_{g-1}]$ from moduli spaces of abelian varieties $\CA_g$,  considered in \cite{COP}. 
		\item[(2)] A recursive formula for cycles of the Hyperelliptic loci, reproving a result of \cite{FabP} that these cycles are tautological. 
		\item[(3)]A different proof of the Ekedahl--Lando--Shapiro--Vainshtein (ELSV) formula of \cite{ELSV}. 
	\end{itemize}
	The first two results correspond to degree $d=1$ maps to the moving elliptic curve and degree $d=2$ maps to  $\p^1$ with two relative points, respectively. In Appendix \ref{AppendixJohannes} written by Johannes Schmitt, we give a low-genus computational verification of the formula in the case of degree $d=1$ and $d=2$ maps to $\p^1$ with two relative points, using the  \texttt{admcycles} package \cite{admcycles}. The third result follows from taking the minimal power of $z$ in the formula associated to maps to $\p^1$ with one relative point, Theorem \ref{ELSV}. 
	
	 By intersecting cycles  with $\psi$-classes, we obtain a numerical version of the formula, Corollary \ref{corollary1},  which leads to a further application: 
	
	\begin{itemize}
		
		\item[(4)] A different proof of the Gromov--Witten/Hurwitz correspondence of \cite{OPcom}. 
	\end{itemize}

	This work grew out of an attempt to give a more geometric perspective on the Gromov–Witten/Hurwitz correspondence. The correspondence concerns only descendent point insertions, however, our formula expresses arbitrary descendent insertions of a curve in terms of Hurwitz numbers, Fulton--MacPherson integrals and Hodge integrals. In particular, all of the aforementioned invariants are explicitly computable. Moreover, this also holds for a moving curve. 
	
	Our formula can therefore be viewed as a cycle-valued refinement of both the ELSV formula and the Gromov--Witten/Hurwitz correspondence for arbitrary insertions. More importantly, it provides a geometric reason for any relation between  admissible covers and stable maps to hold. 
	
	%, which also makes the ELSV formula and the Gromov–Witten/Hurwitz correspondence consequences of the same result.
	
	\subsection{Methods} 
	Theorem \ref{maintheorem} is a cycle-valued wall-crossing formula associated to a change of stability provided by 
	$\epsilon$-unramification, defined in full generality in \cite{NuG} and first considered in \cite{N22} under the different name of 
	$\epsilon$-admissibility. We use the same master space techniques, based on Zhou's theory of entangled tails \cite{YZ}, to establish it. Instead of integrating classes, we push them forward to $\Mbar_{g,n}$. This leads to extra layers of complexity in the wall-crossing formula. In particular, already in the degree one, the formula provides a highly non-trivial relation in $H^*(\Mbar_{g})$, Corollary \ref{cor_one}.

	The proof of the Gromov–Witten/Hurwitz correspondence in Section \ref{sectionGWH} still utilizes the operator formalism from \cite{OPcom}. In particular, we need the results on analytic continuations of certain operators established in \cite{OPeq}. However, the 
	$I$-functions responsible for the wall-crossing formula admit more explicit expressions than the relative Gromov–Witten invariants considered in \cite{OPcom}, which allows us to uncover the completed cycles more effectively.
	
	\subsection{Acknowledgements} We are grateful to Samir Canning, Alessandro Giacchetto, Aitor Iribar Lopez, Aaron Pixton and Rahul Pandharipande for useful discussions on related topics. Special thanks to Alessandro for reading a preliminary version of the paper. 
	
	D.N.\ was supported by a Hermann-Weyl-instructorship from the
	Forschungsinstitut f\"ur Mathematik at ETH Z\"urich. M.S.\ was supported by the starting grant ``Correspondences in enumerative geometry: Hilbert schemes, K3 surfaces and modular forms", No 101041491 of the European Research Council. J.S.\ was supported by SNF-200020-219369 and SwissMAP.

	\section{Cycle-valued wall-crossing formula} \label{sectionCycle}
	\subsection{Admissible covers and stable maps} Let
	\[ \CX \rightarrow \Mbar^\circ_{h,m }\]
	be the universal curve over the moduli space of stable connected curves of genus $h$ with $m$ markings. Fix a degree $d \in \BN$ and consider  $\Mbar^\circ_{h,m }$-relative moduli spaces of stable maps and admissible covers of $\CX$ with possibly  disconnected sources, 
	\[ \Mbar_{g,n}(\CX,\underline{\mu}), \quad \overline{H}(\CX,\underline{\mu},\underline{\eta}), \]
	where $\underline{\mu}=(\eta^1, \dots, \eta^m)$ is a vector of partitions of $d$ which specifies ramification profiles of maps at universal markings of $\CX$, and  $\underline{\eta}=(\eta^1, \dots, \eta^k)$ is a vector of partitions of $d$, which specifies ramification profiles of admissible covers at additional  moving $k$ markings, i.e., over $\Mbar^\circ_{h,m+k}$. In both cases, branching points are ordered by the order of partitions in a vector, while ramification points have a standard order. In less technical terms, these spaces parametrize stable maps and admissible covers of a varying nodal curve of genus $h$ with specified ramification profiles at varying marked points. 
	
	Over nodes, we require maps to be admissible in the sense of \cite{HM82}. In the case of stable maps, we allow target curves to acquire rational bridges to ensure the properness of the moduli spaces. For admissible covers, both rational bridges and rational tails are allowed for the same reason. For brevity, we do not indicate these features explicitly in the notation of the corresponding spaces.
	
		\begin{remark}
		There are several ways to construct spaces $\Mbar_{g,n}(\CX,\underline{\mu})$ an $\overline{H}(\CX,\underline{\mu},\underline{\eta})$. The original constructions were proposed by Li \cite{Li} and Harris--Mumford \cite{HM82}, respectively. Certain (virtual) normalisations of those spaces are provided by considering orbifold \cite{ACV} or logarithmic curves \cite{Kim}. The pushforwards of the resulting (virtual) fundamental classes to moduli spaces of source curves are independent of the chosen construction. For the localization on the master space in \cite{NuG}, the logarithmic perspective was used. The reader is free to choose any preferred approach.
		\end{remark}	
	
		\subsection{Disconnected and connected curves}
	
	Throughout the article, $\Mbar_{g,n}$  will denote the space of possibly disconnected curves of arithmetic genus $g$ with $n$ marked points, while $\Mbar_{g,n}^\circ$ will denote its connected version. More explicitly, 
	\begin{equation}\label{disconnected}
		\Mbar_{g,n}= \coprod_{\{(g_i,N_i)\}_{i=1}^l} \prod^{l}_{i=1} \Mbar^\circ_{g_i,N_i},
	\end{equation}
	where the disjoint union runs over partitions $\{(g_i,N_i)\}_{i=1}^l$ satisfying
	\[
	\sum_{i=1}^l g_i-l+1 = g \text{ and } \coprod_{i=1}^l N_i = \{1,\ldots,n\}, \ N_i\neq \emptyset, 
	\] 
	and $\Mbar^\circ_{g_i,N_i}$  are moduli spaces of connected curves, whose marked points are labelled by elements of the set $N_i$. We permit $(g_i,|N_i|)$ to take values $(0,1)$ and $(0,2)$, defining $\Mbar^\circ_{g_i,|N_i|}$ to be a point in these cases, 
	\[ \Mbar^\circ_{g_i,N_i}= \mathrm{pt}, \quad  \text{ if } (g_i,|N_i|)=(0,1) \text{ or } (0,2), \]
	while factors of the form $\Mbar^\circ_{0,0}$ are excluded.

	\subsection{Gromov--Witten  cycle} For moduli spaces of stable maps, there exist the following natural morphisms given by evaluating maps at markings, and sending maps to their targets and sources: 
	\begin{align*}
		\ev \colon &\Mbar_{g,n}(\CX,\underline{\mu}) \rightarrow \CX^n  
		& \rho \colon \Mbar_{g,n}(\CX,\underline{\mu}) \rightarrow \Mbar^\circ_{h,m}\\
		\pi\colon& \Mbar_{g,n}(\CX,\underline{\mu}) \rightarrow \Mbar_{g,n+\sum \ell(\mu^i)},  
	\end{align*}
	where $\ell(\mu)$ is the length of a partition $\mu$. Let 
	\[ 
	\underline{\gamma}:= \gamma_1\boxtimes \dots \boxtimes\gamma_n \in H^*(\CX^n), \quad \alpha \in H^*(\Mbar^\circ_{h,m})
	\]
	be cohomology classes on $\CX^n$ and $\Mbar^\circ_{h,m}$, respectively. Lastly, we denote
	\[ 
	\Aut(\underline{\mu}):=\prod \Aut(\mu^i),
	\]
	where $\Aut(\mu)$ is the automorphism group of a partition $\mu$, the same notation will be used for the cardinality of this group.  
	\begin{definition}
		We define a Gromov--Witten cycle in $H^*(\Mbar_{g,n+\sum\ell(\mu^i)})$, 
		\begin{equation*}
			\mathsf{GW}_{g,\underline{\mu}}(\underline{\gamma};\alpha) := \frac{1}{\Aut(\underline{\mu})} \cdot  \pi_*([\Mbar_{g,n}(\CX,\underline{\mu})]^{\mathrm{vir}} \cap \ev^*(\underline{\gamma})\cap \rho^*(\alpha )  ).
		\end{equation*}
	\end{definition}
	One can view insertions $\rho^*(\alpha)$ as restrictions to some loci $B \subset \Mbar^\circ_{h,m}$, e.g., a closed point in $\Mbar^\circ_{h,m}$. In this case, it might be more appropriate to replace $\rho^*(\alpha)$ by the fiber product with respect to  $B \rightarrow  \Mbar^\circ_{h,m}$. Consequently, the symbol $\alpha$ in the notation of cycles above can also be regarded as a place-holder for a fiber product. 
	\subsection{Hurwitz cycle}	Similarly, for moduli spaces of admissible covers, there exist the following natural morphisms given by evaluating covers at  additional $k$ branching points associated to $\underline{\eta}$, and sending covers to their targets and  sources: 
	\begin{align*}
		\tilde{\ev}  \colon & \overline{H}(\CX,\underline{\mu},\underline{\eta}) \rightarrow \CX^k  
		&\rho \colon \overline{H}(\CX,\underline{\mu},\underline{\eta}) \rightarrow \Mbar^\circ_{h,m}\\
		\pi\colon& \overline{H}(\CX,\underline{\mu},\underline{\eta}) \rightarrow \Mbar_{g_0, \sum \ell(\mu^i)+\sum \ell(\eta^i)},	  
	\end{align*}
	where $g_0$ is determined by the Riemann--Hurwitz formula applied to $\underline{\mu}$, $\underline{\eta}$ and $h$.
	Let 
	\[ 
	\underline{\gamma}:= \gamma_1\boxtimes \dots \boxtimes\gamma_k \in H^*(\CX^k), \quad \alpha \in H^*(\Mbar^\circ_{h,m})
	\]
	be cohomology classes on $\CX^k$ and $\Mbar^\circ_{h,m}$, respectively.  
	\begin{definition}
		We define a Hurwitz cycle in $H^*(\Mbar_{g_0, \sum \ell(\mu^i)+\sum \ell(\eta^i)})$, 
		\begin{equation*}
			\mathsf{H}_{\underline{\mu}, \underline{\eta}}(\underline{\gamma};\alpha):= \frac{1}{\Aut(\underline{\mu})\Aut(\underline{\eta})}\cdot  \pi_*([\overline{H}(\CX,\underline{\mu},\underline{\eta})] \cap \tilde{\ev}^*(\underline{\gamma})\cap \rho^*(\alpha) ).
		\end{equation*}
			\end{definition}
		
		As for Gromov--Witten cycles, the symbol $\alpha$ can  be treated as a place-holder for a fiber product  with respect to some $B \rightarrow \overline{H}(\CX,\underline{\mu},\underline{\eta})$. %Both kinds of cycles are set to zero in unstable cases. 

		%More generally, in both cases we can also introduce intersections at the markings on the target corresponding to ramifications $\underline{\mu}$, which we will avoid for the notational symplicity

	%\begin{remark} For algebraic insertions $\gamma_i$, our lift to the level of Chow groups. However, working with Chow groups excludes non-algebraic insertions (e.g., those from $H^\mathrm{odd}(X)$).
	%\end{remark}
	\subsection{Star-shaped graphs} \label{Star} Let $\Gamma$ be a star-shaped rooted graph  with leaves,  genus labels on  vertices and partition labels on edges, as depicted in Figure \ref{tree1}. Leaves are labelled by the elements of the set $\{1,\dots , n\}$. We order non-root vertices by a standard order - the order that agrees with the order of the genus labels. Let $N_i\subseteq \{1,\dots,n\}$ be the labels of leaves at $i$-th vertex,  and $n_i=|N_i|$. The set of labels on leaves $N_i$ can be empty.

	We assume that our graphs satisfy the following numerical conditions: 
	\begin{equation} \label{stability}
		|\eta^i|=d, \quad 2g_i-2+d+\ell(\eta^{i})+n_i>0 \quad \text{ for } i=1, \dots, k.
	\end{equation}
	For brevity, we will refer to labelled star-shaped graphs with leaves simply as graphs. The automorphism group  $\Aut(\Gamma)$ of a graph is defined to be the automorphism group of the set of triples $\{(g_1,N_1, \eta_1), \dots, (g_k,N_k, \eta_k) \}$, the same notation will be used for the cardinality of the group.    	 %\footnote{We do not require an automorphism to preserve the order of vertices but only the labels. }
	
	\begin{figure}[!ht]
		\centering
		\begin{tikzpicture}		
			
			\node (1) at (0,0) {};
			\node (2) at (-1.7,-2) {};
			\node (3) at (-0.2,-2) {};
			\node (5) at (0.8, -2) {\dots};
			\node (6) at (1.7, -2) {};
			
			\draw[black, thick] (0,0)--(-1.7,-2);
			\filldraw[ fill=white,draw=white] (-0.85,-1) circle (.25cm);
			\node at (-0.85,-1) {$\eta_1$};
			
			\draw[black, thick] (0,0)--(-0.2,-2);
			\filldraw[ fill=white,draw=white] (-0.05,-1) circle (.25cm);
			\node at (-0.05,-1) {$\eta_2$};
			
			\draw[black,thick] (0,0)--(1.7,-2);
			\filldraw[ fill=white,draw=white] (0.9,-1) circle (.25cm);
			\node at (0.9,-1) {$\eta_k$};
			
			\draw[black, thick] (-1.7,-2)--(-1.7,-2.6);
			\draw[black, thick] (-1.7,-2)--(-2,-2.6);
			\draw[black, thick] (-1.7,-2)--(-1.4,-2.6);
			\draw[black, thick] (-0.2,-2)--(-0.2,-2.6);
			\draw[black, thick] (1.7,-2)--(1.4,-2.6);
			\draw[black, thick] (1.7,-2)--(2,-2.6);
			
			\filldraw[thick, fill = white] (0,0) circle (.25cm) node at (1) {$g_0$};
			\filldraw[thick, fill=white] (-1.7,-2) circle (.25cm) node at (2) {$g_1$};
			\filldraw[thick, fill = white] (-0.2,-2) circle (.25cm) node at (3) {$g_2$};
			\filldraw[thick, fill = white] (1.7,-2) circle (.25cm) node at (6) {$g_k$};
			
			\node (7) at (-1.7,-2.8) {$\underbrace{           }$};
			\node (8) at (-1.7,-3.2) {$N_1$};
			
			\node (7) at (-0.2,-2.8) {$\underbrace{    }$};
			\node (8) at (-0.2,-3.2) {$N_2$};
			
			\node (7) at (1.7,-2.8) {$\underbrace{    }$};
			\node (8) at (1.7,-3.2) {$N_k$};
			
		\end{tikzpicture}
		\caption{Star-shaped graph.} \label{tree1}
	\end{figure}
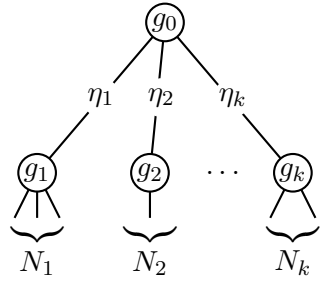

	To each graph $\Gamma$, and a choice of $(g,n)$ with $\underline{\mu}=(\mu^1, \dots,\mu^m)$, we associate
	\[
	M_\Gamma=\Mbar_{g_0,\sum\ell(\mu^i)+\sum \ell(\eta^i)}\times \prod^{k}_{i=1}\Mbar_{g_i, n_i+\ell(\eta^i)},
	\]
	together with a gluing map 
	\[
	\mathrm{gl}_\Gamma \colon M_\Gamma \rightarrow \Mbar_{g,\sum \ell(\mu^i)+n},
	\]
	such that $g=\sum^{k}_{i=1}(g_i+\ell(\eta^{i}))+ g_0-k$, and  markings are ordered (labelled) by the parts of partitions with a standard order and by labels of the leaves; curves are glued along markings with the same label. On unstable components $\Mbar^\circ_{0,1}$ and $\Mbar^\circ_{0,2}$, the gluing map $\mathrm{gl}_\Gamma$ corresponds to the forgetful map and to the map which glues two marked points, respectively. 
	
	On the root component $\Mbar_{g_0,\sum \ell(\mu^i)+\sum \ell(\eta^i)}$, we define 
	\[
	\tilde{\psi}_i:=\eta^i_1\psi_{\{\eta^i_1\}},
	\]
	where $\psi_{\{\eta^i_1\}}$ is the $\psi$-class corresponding to the point labelled by the part $\{\eta^i_1\}$ of the partition $\eta^i=(\eta^i_1,\dots, \eta^i_{\ell(\eta^i)})$. Note that over the Hurwitz loci, we have  
	\[
	\eta^i_j\psi_{\{\eta^i_j\}}=\eta^i_{j'}\psi_{\{\eta^i_{j'}\}},
	\]
	because $\eta^i_j\psi_{\{\eta^i_j\}}$ is equal to the $\psi$-class of the target. Hence for the statement of Theorem \ref{maintheorem}, the insertion of $\tilde{\psi}_i$ is independent of the choice of the part of the partition.
	
	Finally,  to a graph $\Gamma$ and a vector of classes $(\gamma_1,\dots, \gamma_n)$, $\gamma_i \in H^*(\CX)$,  we  associate a class 
	\begin{equation} \label{classGamma}
		\gamma_\Gamma:= \bigg(\prod_{j \in N_1  } \gamma_j \bigg) \boxtimes \dots \boxtimes \bigg(\prod_{j \in N_k} \gamma_j \bigg) \in H^*(\CX^k),
	\end{equation}
	where $N_i$ is the set of leaves  at the $i$-th vertex. 
	
	\subsection{Vertex}
	
	Consider a parametrized projective line $\p^1$ with a $\BC^*$-action, 
	\[ t \cdot [x,y]:=[t\cdot x,y], \quad t\in \BC^*,\]
	we set $0:=[0,1]$ and $\infty:=[1,0]$. 
	\begin{definition} Let $V_{g,n,\eta} \subset \Mbar_{g,n}(\p^1/\infty, \eta)$
		be the space of stable maps from possibly disconnected curves of genus $g$ with $n$ markings to $\p^1$, which are admissible\footnote{Note that maps to expanded degenerations of $\p^1$ are excluded in $V_{g,n,\eta}$.} over $\infty\in \p^1$ with a ramification profile $\eta$. Ramification points over $\infty$ have a standard order. Let $V^\circ_{g,n,\eta}$ be  the similarly defined space but with connected sources. 
	\end{definition}
	The virtual dimension of $V_{g,n,\eta}$ can be readily determined,
	\[
	\mathrm{vdim}(V_{g,n,\eta})=2g-2+d+\ell(\eta)+n.
	\] The space $V_{g,n,\eta}$ also inherits the $\BC^*$-action from $\p^1$, 
	\[ \BC^* \curvearrowright V_{g,n,\eta}.\]
	The $\BC^*$-fixed locus $V_{g,n,\eta}^{\BC^*}$ is proper. In fact, any element of $V_{g,n,\eta}^{\BC^*}$ consists of a contracted genus $g$ curve over $0\in\BP^1$ which is connected to $\ell(\eta)$ many $\BP^1$-tubes mapping to the target $\p^1$ with degree $\eta_i$. As a result, we have a map
	\[ \pi\colon V_{g,n,\eta}^{\BC^*} \rightarrow \overline{M}_{g,n+\ell(\eta)} \]
	of degree $\frac{1}{\prod_{j}\eta_j}$, which coincides with the forgetful map. 	Let $N^{\mathrm{vir}}$ be the virtual normal complex of $V_{g,n,\eta}^{\BC^*}$. Finally, we denote 
	\[z:=e_{\BC^*}(\mathbf{z})\]
	for the weight-1 representation $\mathbf{z}$ of $\BC^*$.   
	
	\begin{definition} \label{defnI}
		We define $I$-functions,
		\[
		\mathsf{I}_{g,n,\eta}(z) := z \cdot \prod^{\ell(\eta)}_{j=1} \eta_j \cdot \pi_*\left( \frac{[V_{g,n,\eta}^{\BC^*}]}{e_{\BC^*}(N^{\mathrm{vir}})} \right) \in H^*(\Mbar_{g, n+\ell(\eta)})[z^\pm], 
		\]	
		such that localised equivariant classes are  expanded in the range $|z| >1$. 
	\end{definition}	
	
	\subsection{Hodge classes}	\label{Hodgeclasses}
	
	Taking the disjoint union of stable maps gives us an isomorphism
	\begin{equation} \label{decomposition1}
		\coprod_{\{(g_i,N_i,\eta^i)\}_{i=1}^l} \prod_{i=1}^l V^\circ_{g_i,N_i,\eta^i}= V_{g,n,\eta},
	\end{equation}
	where the disjoint union runs over partitions $\{(g_i,N_i,\eta^i)\}_{i=1}^l$ satisfying
	\[\sum_{i=1}^l g_i-l+1 = g, \  \coprod_{i=1}^l N_i = \{1,\ldots,n\}, \ \eta^i\neq \emptyset \text{ and } \coprod_{i=1}^l \eta^i = \eta.\]
	We have an identically looking decompositon for the $\BC^*$-fixed loci. As a result one can rewrite the $I$-function in the following way:
	\begin{equation} \label{decomposition}
		\mathsf{I}_{g,n,\eta}(z)= z \cdot  \sum_{\{(g_i,N_i,\eta^i)\}_{i=1}^l}\prod_{i=1}^l \mathsf{I}^\circ_{g_i,n_i,\eta^i}(z),
	\end{equation}
	where 
	\begin{align*}
		\mathsf{I}^\circ_{g, n, \eta }(z):=  \prod^{\ell(\eta)}_{j=1} \eta_j \cdot \pi_*\left( \frac{[(V^\circ_{g,n,\eta})^{\BC^*}]}{e_{\BC^*}(N^{\mathrm{vir}})} \right) \in H^*(\Mbar^\circ_{g, n+\ell(\eta)})[z^\pm], 
	\end{align*}
	which admits an expression in terms of $\lambda$- and $\psi$-classes, obtained by analysing $e_{\BC^*}(N^{\mathrm{vir}})$. 
	
	\begin{proposition}[\cite{GP},\cite{OPeq}] \label{localisation_formula}
		For $(g,n)\neq (0,0),(0,1)$, we have 
		\[ 
		\mathsf{I}^\circ_{g,n,\eta}(z)=z^{\ell(\eta)-|\eta|-1}\cdot \prod^{\ell(\eta)}_{j=1}\frac{\eta_j^{\eta_j}}{\eta_j!} \cdot \frac{\Lambda^\vee(z)}{\prod^{\ell(\eta)}_{j=1} \frac{z}{\eta_j}-\psi_j} \in H^*(\Mbar^\circ_{g, n+\ell(\eta)})[z^\pm],
		\]
		where $\Lambda^\vee(z):= e(\mathbb{E}^\vee \mathbf{z})=\sum (-1)^j \lambda_j z^{g-j}$, and  $ \mathbb{E}$ is the Hodge bundle. For $(g,n)=(0,0),(0,1)$, 
		\begin{align*}
		\mathsf{I}^\circ_{0,n,(\eta_1)}(z)&=z^{1-\eta_1-n} \cdot \frac{\eta_1^{\eta_1-1+n}}{\eta_1!}, \quad  
		\mathsf{I}^\circ_{0,0,(\eta_1,\eta_2)}(z)=z^{-\eta_1-\eta_2} \cdot \frac{\eta_1^{\eta_1+1}\eta_2^{\eta_2+1}}{\eta_1!\eta_2!(\eta_1+\eta_2)} \in \BQ[z^\pm]. 
		\end{align*}

	\end{proposition}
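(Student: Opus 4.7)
The plan is to apply the virtual localization theorem of Graber--Pandharipande \cite{GP} to the $\BC^*$-action on $V^\circ_{g,n,\eta}$ and identify the pushforward of the resulting fixed-locus contributions with the stated formula.

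In the stable range $(g,n) \neq (0,0),(0,1)$, I would first describe the $\BC^*$-fixed locus. A fixed stable map consists of a single connected contracted curve $C_0$ of arithmetic genus $g$ over $0 \in \p^1$ carrying the $n$ markings and $\ell(\eta)$ further special points where the nodes sit, joined to $\ell(\eta)$ rigid $\p^1$-tubes $C_j$ which map to the target with degree $\eta_j$, totally ramified over $0$ and $\infty$. This identifies $(V^\circ_{g,n,\eta})^{\BC^*}$ as a stack with $\Mbar^\circ_{g,n+\ell(\eta)}$, the map $\pi$ being of degree $1/\prod_j \eta_j$ because of the $\BZ/\eta_j$ automorphism of each tube as a cover of the target; this is precisely canceled by the prefactor $\prod_j \eta_j$ in Definition \ref{defnI}.

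Next, I would decompose the virtual normal complex into three standard pieces. The contracted component $C_0$ contributes the obstruction $H^1(C_0, \CO_{C_0}) \otimes \mathbf{z} = \mathbb{E}^\vee \otimes \mathbf{z}$, whose $\BC^*$-equivariant Euler class $\Lambda^\vee(z) = \sum (-1)^j \lambda_j z^{g-j}$ goes in the numerator. Each of the $\ell(\eta)$ nodes between $C_0$ and $C_j$ contributes a smoothing factor $T_{p_j}C_0 \otimes T_{p_j}C_j$ equal to $z/\eta_j - \psi_j$ in the denominator, since the $\BC^*$-weight on $T_{p_j}C_j$ is $z/\eta_j$ and the cotangent line on $C_0$ yields $\psi_j$. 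Finally, the deformations and automorphisms of the rigid tubes contribute a pure power of $z$ together with combinatorial constants; a direct $\BC^*$-equivariant Riemann--Roch computation on each $\p^1$-tube, combined with the $\BZ/\eta_j$ quotient, produces the factor $\prod_j \eta_j^{\eta_j}/\eta_j!$. The overall exponent of $z$ is pinned down by the virtual dimension count $\vdim(V^\circ_{g,n,\eta}) = 2g-2+d+\ell(\eta)+n$ and matches $\ell(\eta) - |\eta| - 1$.

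Assembling these pieces into the virtual localization formula and applying $\pi_*$ then yields the claimed expression. The main obstacle is the careful bookkeeping of the tube contribution: one must track the $\BC^*$-characters on $H^0(C_j, f^*T_{\p^1}(-\infty)|_{C_j})$, balance them against the automorphism factor, and identify the resulting product with $\eta_j^{\eta_j}/\eta_j!$; this is where the input of \cite{OPeq} is convenient. For the unstable cases $(g,n) = (0,0)$ and $(0,1)$, the fixed locus contains no contracted component: in $(0,1)$ the source is a single $\p^1$-tube carrying the marking, and in $(0,0)$ with $\eta = (\eta_1,\eta_2)$ the source is two tubes glued at a node. These are isolated stacky points, and the stated rational numbers come from the same localization recipe with the Hodge contribution trivial and the $z$-weights inserted directly. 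No new geometric input is needed beyond the case distinction.
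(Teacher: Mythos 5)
Your proposal is correct and is essentially the paper's own route: the paper does not prove this proposition but quotes it from \cite{GP} and \cite{OPeq}, noting only that the formula is "obtained by analysing $e_{\BC^*}(N^{\mathrm{vir}})$", and your outline is exactly that standard virtual-localization analysis (Hodge obstruction $\Lambda^\vee(z)$ in the numerator, node-smoothing factors $z/\eta_j-\psi_j$ in the denominator, tube and automorphism contributions producing $\prod_j \eta_j^{\eta_j}/\eta_j!$ and the power of $z$, with the unstable cases handled as isolated stacky fixed points).
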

	
	By introducing certain conventions on what $\psi$-classes are for unstable values of $(g,n)$ as it was done in \cite{OPeq}, the formulas for $\mathsf{I}^\circ_{g,n,\eta}(z)$ admit a consistent expression for all values of  $(g,n)$.
	
	\subsection{Wall-crossing formula}

	\begin{theorem} \label{maintheorem} Assume $(h,m)\neq (0,1)$. Then we have
		\[ 
		\mathsf{GW}_{g,\underline{\mu}}(\underline{\gamma};\alpha)=\sum_{\Gamma} \frac{1}{\Aut(\Gamma)} \cdot \mathrm{gl}_{\Gamma*}\left(\mathsf{H}_{\underline{\mu},\underline{\eta}}(\gamma_\Gamma ;\alpha) \boxtimes \prod^{k}_{i=1} \mathsf{I}_{g_i,n_i,\eta^i}(- \tilde{\psi_i})\right), 
		\]
		such that the sum is taken over all graphs $\Gamma$ with $n$ leaves,   subject to the following condition:
		\begin{align*}
			g&=\sum^{k}_{i=1}(g_i+\ell(\eta^{i}))+ g_0-k, \\
			2g_0-2&= d(2h-2)+\sum^m_{i=1} (d-\ell(\mu^i))+\sum^{k}_{i=1} (d-\ell(\eta^i)).
		\end{align*}
		The negative powers of $\psi$-classes are set to zero. 
	\end{theorem}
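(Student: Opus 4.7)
The plan is to prove Theorem \ref{maintheorem} as a cycle-level wall-crossing formula for the $\epsilon$-unramification stability of \cite{NuG}, which interpolates between admissible covers ($\epsilon \to 0^+$) and stable maps ($\epsilon = \infty$). For each $\epsilon \in \BQ_{>0}$ one has a proper moduli space $\overline{M}^{\epsilon}$ of $\epsilon$-unramified maps to expansions of $\CX$, locally constant in $\epsilon$ and jumping at a discrete set of walls. The pushforward to $\Mbar_{g,n+\sum\ell(\mu^i)}$ of $[\overline{M}^{\epsilon}]^{\mathrm{vir}} \cap \ev^*(\underline{\gamma})\cap \rho^*(\alpha)$ equals $\mathsf{GW}_{g,\underline{\mu}}(\underline{\gamma};\alpha)$ for $\epsilon=\infty$, and decomposes as a sum over admissible ramification profiles $\underline{\eta}$ of Hurwitz cycles $\mathsf{H}_{\underline{\mu},\underline{\eta}}(\underline{\gamma};\alpha)$ for $\epsilon=0^+$, using that in the $\epsilon \to 0^+$ limit the source becomes a cover whose unramified branch points are remembered as extra markings. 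Summing wall contributions as $\epsilon$ decreases from $\infty$ to $0^+$ should yield exactly the right-hand side of the formula.

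\textbf{Master spaces, localisation, and entangled tails.} At each wall $\epsilon_0$ the two adjacent chambers differ by the appearance or disappearance of rational components carrying ramification and contracted markings. Following Zhou \cite{YZ} as adapted to the present setting in \cite{NuG}, one constructs a master space $\mathfrak{M}_{\epsilon_0}$ equipped with a $\BC^*$-action whose two extremal fixed loci recover $\overline{M}^{\epsilon_0^+}$ and $\overline{M}^{\epsilon_0^-}$; the remaining fixed components, the \emph{entangled tails}, parametrise configurations in which the wall components are cut off and replaced by elements of the $\BC^*$-fixed locus $V^{\BC^*}_{g_i,n_i,\eta^i}$ of Definition \ref{defnI}. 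Virtual $\BC^*$-localisation on $\mathfrak{M}_{\epsilon_0}$ and extraction of the $z^0$-coefficient after pushforward to $\Mbar_{g,n+\sum\ell(\mu^i)}$ produces the cycle identity
\[ \pi_*\bigl([\overline{M}^{\epsilon_0^+}]^{\mathrm{vir}}\cap\cdots\bigr) - \pi_*\bigl([\overline{M}^{\epsilon_0^-}]^{\mathrm{vir}}\cap\cdots\bigr) = \sum_{\Gamma} \frac{1}{\Aut(\Gamma)}\,\mathrm{gl}_{\Gamma *}\bigl(\cdots \boxtimes \prod_{i=1}^{k}\mathsf{I}_{g_i,n_i,\eta^i}(-\tilde{\psi}_i)\bigr), \]
where the sum is over star-shaped graphs whose non-root vertices correspond to the entangled tails at that wall. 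The sign and the insertion of $-\tilde{\psi}_i$ arise by identifying the equivariant parameter $z$ with the smoothing parameter of the node at which the tail is attached, whose pullback under $\mathrm{gl}_\Gamma$ is precisely $-\tilde{\psi}_i$ on the core factor.

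\textbf{Telescoping and main obstacles.} Telescoping the wall identities from $\epsilon=\infty$ down to $\epsilon=0^+$, the left-hand side collapses to $\mathsf{GW}_{g,\underline{\mu}}(\underline{\gamma};\alpha)$ minus the $\epsilon=0^+$ pushforward, and the latter reappears on the right-hand side as the trivial graph with no tails (where the product of $\mathsf{I}$-factors is empty). The core contribution at each wall is supported on the product of Hurwitz data on the root, explaining why the class $\gamma_\Gamma$ of \eqref{classGamma} appears on the factor $\overline{H}(\CX,\underline{\mu},\underline{\eta})$. The main technical obstacle is an honest identification of the entangled tails with $V^{\BC^*}_{g_i,n_i,\eta^i}$ in the logarithmic/orbifold model of \cite{NuG}: one must match the deformation theory of the wall rational components with the virtual normal complex of $V^{\BC^*}_{g_i,n_i,\eta^i}$ inside $V_{g_i,n_i,\eta^i}$, and verify that the numerical conditions \eqref{stability} describe precisely the combinatorial types arising at the walls. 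The factor $\prod_j \eta^i_j \cdot z$ in Definition \ref{defnI} then absorbs the degree of the forgetful map $V^{\BC^*}_{g_i,n_i,\eta^i} \to \Mbar_{g_i,n_i+\ell(\eta^i)}$ together with the smoothing weight of the node. A secondary bookkeeping issue is to recover the factor $1/\Aut(\Gamma)$ by combining the symmetrisation $1/\Aut(\underline{\mu})\Aut(\underline{\eta})$ already present in $\mathsf{H}_{\underline{\mu},\underline{\eta}}$ with the gluing multiplicities at the node smoothings, and the assumption $(h,m)\neq (0,1)$ enters precisely to guarantee stability of the base $\Mbar^\circ_{h,m}$ so that the master space construction of \cite{NuG} applies.
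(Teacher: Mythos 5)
Your proposal follows the paper's approach: master space for $\epsilon$-unramification stability, $\BC^*$-localisation, entangled tails giving the $I$-function factors, and iteration of the one-wall formula across all walls. That said, there are a few substantive slips worth flagging.

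First, the direction of the $\epsilon$-parameter is reversed relative to Definition \ref{defnstability}: small $\epsilon$ gives stable maps (the multiplicity bound $\mathrm{mult}_x(\mathrm{br}(f))\leq 1/\epsilon$ is vacuous when $1/\epsilon$ is large, while the rational-tail condition is restrictive) and $\epsilon>1$ gives admissible covers. Your proposal has admissible covers at $\epsilon\to 0^+$ and stable maps at $\epsilon=\infty$, which is backwards. Second, the paper extracts the $z^{-1}$-coefficient (the residue at $z=0$) from the localisation identity, not the $z^0$-coefficient; this is what makes the divisorial fixed loci contribute their full virtual classes and the wall contributions contribute a residue.

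The more serious gap is your explanation of the hypothesis $(h,m)\neq (0,1)$. It is not about ``stability of the base $\Mbar^\circ_{h,m}$'' — the paper explicitly defines $\Mbar^\circ_{0,1}$ to be a point. The real issue is twofold. First, when the target is $(\BP^1,\infty)$, for small $\epsilon$ the whole target is a rational tail, so the moduli space of $\epsilon$-unramified maps is eventually empty and the wall-crossing at the lowest wall has a different, simpler master space (Theorem \ref{ELSV}); that wall produces the \emph{polar} part of the $I$-function. Second, and crucially, this polar-part computation for $(h,m)=(0,1)$ is needed as an ingredient in the general case: the divisor contributions $D_i$ in the residue at each wall are controlled by polar parts of $I$-functions (the analogue of \cite[Lemma 7.2.1]{YZ}), so the $(h,m)=(0,1)$ wall-crossing must be established first and fed into the residue analysis (\ref{res}). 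Your proposal does not touch on the role of the $D_i$ divisors or the polar part of the $I$-functions at all, and without this ingredient the passage from the raw localisation formula to the clean statement $\sum_\Gamma \mathsf{I}(-\tilde\psi_i)$ with negative powers of $\psi$ set to zero does not go through. Finally, ``telescoping'' is a slight mischaracterisation: the wall-contribution terms at each wall carry root factors $\Mbar^{\epsilon_+}_{g_0,n_0}(\CX,\underline\mu,\underline\eta)$ which must themselves be evolved through the remaining walls, so the argument is a recursion in which a given graph $\Gamma$ is assembled one weight-level of non-root vertices at a time, rather than a simple telescope of a single quantity.
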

	
	Observe that the classes $\mathsf{H}_{\underline{\mu},\underline{\eta}}(\gamma_\Gamma ;\alpha)$ live on the root component of the graph $\Gamma$, while the classes $\mathsf{I}_{g,n,\eta}(z)$ are on the non-root components. However, the substitution of $z$ by a $\psi$-class is done on the root component. For example, for a class $A\boxtimes Bz^k$, where $A$ is on the root component and $B$ is on the non-root components, the substitution of $z$ by a $\psi$-class takes the following form:
	\[
	A\boxtimes Bz^k \mapsto A(-\tilde{\psi})^k\boxtimes B.
	\]
	The formula also admits a natural geometric interpretation:
	\[
	[\textsf{Stable maps}]= \sum[\textsf{Ramified covers}]\cdot [\textsf{Contracted components}],
	\] 
	where the sum is taken over all ways of attaching contracted components to a cover. In particular, these components may be attached either away from or at the ramification locus. In this perspective, $I$-functions are precisely the (virtual) classes associated to moduli spaces of contracted components of curves. The case $(h,m)=(0,1)$ admits a different wall-crossing  formula, which is considered in Section \ref{SectionELSV}.
	
	\section{Proof of Theorem}
	\subsection{Idea of the proof} The result follows from the localisation on the master space associated to $\epsilon$-unramification, defined in full generality in \cite{NuG} and considered in a slightly different context in \cite{N22}.\footnote{The set-up of \cite{N22} did not include markings in the source curves, and only relative insertions were permitted.} In fact, Theorem \ref{maintheorem} is a cycle-valued upgrade of the numerical wall-crossing discussed in \cite{NuG}. The proof is quite similar, the difference is that we project our classes to $\Mbar_{g,n}$ instead of a point. We establish a wall-crossing formula for one wall in Section \ref{allwalls}.  Theorem \ref{maintheorem} is obtained by applying (\ref{onewall}) to all walls along the path from stable maps to admissible covers.
	\subsection{Stability}	Let  $(C,\mathbf{p})$ and $(X, \mathbf{x})$ be two marked nodal curves.  Consider a map, 
	\[
	f\colon (C,\mathbf{p})  \rightarrow  ( X, \mathbf{x}),
	\]
	admissible over $\mathbf{x}$ and nodes. Let $\mathbf{p}'\subset C$ be the set of points given by the preimages of marked points $\mathbf{x} \subset X$. To such a map, we can associate a branching complex, as considered in \cite{FP} in the case of a smooth target, 
	\[ 
	Rf_*\Big[f^*\Omega^{\log}_{( X,\mathbf{x})} \rightarrow \Omega^{\log}_{(C,\mathbf{p}, \mathbf{p}')}\Big] \in \mathrm{D}^{b}( X),
	\]
	where  $\Omega^{\log}_{(C, \mathbf{p},\mathbf{p}')}$ is the logarithmic cotangent bundle associated to a marked nodal curve, the same apples to $\Omega^{\log}_{(X,\mathbf{x})}$. The branching complex is a zero-dimensional perfect complex supported over:
	\begin{itemize}
		\item  branching points of $f$ away from $\mathbf{x}$ and nodes, 
		\item images of marked points $\mathbf{p}$,
		\item images of contracted components and nodes of $C$.
	\end{itemize}

	By taking the support of the complex weighted by the Euler characteristics, we obtain the branching divisor of $f$, 
	\[
	\mathrm{br}(f) \in \mathrm{Div}(X).
	\]
	We use the branching divisor to define  $\epsilon$-unramification of maps. 
	\begin{definition} \label{defnstability}
		Given a number $\epsilon \in \BR_{>0}$. An admissible map $f\colon (C,\mathbf{p}) \rightarrow  (X, \mathbf{x})$ is $\epsilon$-unramified, if
		\begin{itemize}
			\item for all points $x \in X$, $\mathrm{mult}_x(\mathrm{br}(f))\leq 1/\epsilon$,
			\item for all rational tails $P \subset (X, \mathbf{x})$, $\deg(\mathrm{br}(f)_{|P})> 1/\epsilon$,
			\item $\Aut(f)$ is finite. 
		\end{itemize}
	\end{definition}
	Observe that for $\epsilon \ll1$, an $\epsilon$-unramified map is a stable map. For $\epsilon=1$, it is an admissible cover with simple ramifications and markings in the source. For $\epsilon>1$, it is an unramified admissible cover without marked points in the source. 
	
	Let 
	\[\Mbar^{\epsilon}_{g,n}(\CX, \underline{\mu}) \colon  (Sch/\BC)^\circ \rightarrow Grpd\]
	be the moduli space of $\epsilon$-unramified maps of degree $d$ from possibly disconnected curves of genus $g$ with $n$ marked points, and with ramifications over $m$ marked points on the target  specified a vector of partitions $\underline{\mu}:=(\mu^1,\dots,\mu^m)$. These are proper spaces by \cite[Theorem 3.15]{NuG}; see also  \cite{N22}, where the case of a moving curve without markings on source curves was considered. 
	
	By the discussion above, these spaces specialise to $\Mbar_{g,n}(\CX,\underline{\mu})$ for $\epsilon \ll1$,  and  to $\overline{H}(\CX,\underline{\mu})$ for $\epsilon >1$ and $n=0$.  They also admit both kinds of evaluation morphisms, 
	\begin{align*}
		&\ev \colon \Mbar^{\epsilon}_{g,n}(\CX, \underline{\mu}) \rightarrow \CX^n \quad  \\
		&\tilde{\ev} \colon \Mbar^{\epsilon}_{g,n}(\CX, \underline{\mu}) \rightarrow \CX^m,
	\end{align*}
	given by marked points on the the source and the branching points on the target, respectively.

	\subsection{Master space}
	Let $\epsilon_0=1/d_0$ for some $d_0 \in \BN$. We call such $\epsilon_0$ a wall. Let $\epsilon_-$ and $\epsilon_+$ be values close to a wall from the left and the right, respectively.  For each wall $\epsilon_0$, there exists a proper master space equipped with a perfect obstruction theory 
	\[ 
	M\Mbar^{\epsilon_0}_{g,n}(\CX,\underline{\mu}),
	\] 
	constructed in \cite[Section 6]{NuG} and also in \cite[Section 3]{N22}. The construction is inspired by a similar master space introduced by Zhou in the context of GIT quasimaps  \cite{YZ}, using the theory of entangled tails. The master space  carries a $\BC^*$-action, 
	\[ 
	\BC^* \curvearrowright M\Mbar^{\epsilon_0}_{g,n}(\CX,\underline{\mu}). 
	\]
	The $\BC^*$-fixed locus admits the following expression, 
	\begin{align} \label{fixed_locus}
		M\Mbar^{\epsilon_0}_{g,n}(\CX,\underline{\mu})^{\BC^*}&=\Mbar^{\epsilon_-}_{g,n}(\CX, \underline{\mu}) \sqcup \widetilde{M}^{\epsilon_+}_{g,n}(\CX, \underline{\mu}) \sqcup \coprod_{w(\Gamma)=d_0} F_{\Gamma}.
	\end{align}
	The first two components in (\ref{fixed_locus}) are divisors, and, in particular, their normal bundles inside $M\Mbar^{\epsilon_0}_{g,n}(\CX,\underline{\mu})$ are of rank 1; the spaces $\widetilde{M}^{\epsilon_+}_{g,n}(\CX, \underline{\mu})$ are certain blow-ups of $\Mbar^{\epsilon_+}_{g,n}(\CX, \underline{\mu})$, see Section \cite[Section 6.7]{NuG}; the pushforwards of their virtual fundamental classes agree with those of $\Mbar^{\epsilon_+}_{g,n}(\CX, \underline{\mu})$. 
	
	On the other hand, as explained in \cite[Section 6.7]{NuG}, elements of $F_{\Gamma}$ are maps to curves with rational tails of degree $d_0$ with respect to the branching divisor, such that the shape of a map (i.e., how the genus and ramifications are redistributed) is determined by the graph $\Gamma$; over the distinguished degree-$d_0$ rational tails specified by non-root vertices of the graph, the map belongs to $V^{\BC^*}_{g_i, n_i, \eta^{i}}$.  The union is taken over all star-shaped graphs $\Gamma$ of weight $d_0$, that is 
	\[
	2g_i-2+ d+\ell(\eta^{i})+n_i=d_0  \quad\text{ for } i=1, \dots, k,
	\]
	the weight of a graph is denoted by $w(\Gamma)$. We allow leaves at the root vertex, labelling them by a possibly empty subset $N_0 \subseteq \{1,\dots ,n\}$, as depicted in Figure \ref{tree2}. We also require the graph to satisfy the condition $g=\sum^{k}_{i=1}(g_i+\ell(\eta^{i}))+ g_0-k$.  
	
	\begin{figure}[!ht]
		\centering
		\begin{tikzpicture}		
			
			\node (1) at (0,0) {};
			\node (2) at (-1.7,-2) {};
			\node (3) at (-0.2,-2) {};
			\node (5) at (0.8, -2) {\dots};
			\node (6) at (1.7, -2) {};

			\draw[black, thick] (0,0)--(-1.7,-2);
			\filldraw[ fill=white,draw=white] (-0.85,-1) circle (.25cm);
			\node at (-0.9,-1) {$\eta_{1}$};
			
			\draw[black, thick] (0,0)--(-0.2,-2);
			\filldraw[ fill=white,draw=white] (-0.05,-1) circle (.25cm);
			\node at (0.05,-1) {$\eta_{2}$};
			
			\draw[black,thick] (0,0)--(1.7,-2);
			\filldraw[ fill=white,draw=white] (0.9,-1) circle (.25cm);
			\node at (1,-1) {$\eta_{k}$};
			
			\draw[black, thick] (-1.7,-2)--(-1.7,-2.6);
			\draw[black, thick] (-1.7,-2)--(-2,-2.6);
			\draw[black, thick] (-1.7,-2)--(-1.4,-2.6);
			\draw[black, thick] (-0.2,-2)--(-0.2,-2.6);
			\draw[black, thick] (1.7,-2)--(1.4,-2.6);
			\draw[black, thick] (1.7,-2)--(2,-2.6);
			\draw[black, thick] (0,0)--(0.3,0.6);
			\draw[black, thick] (0,0)--(0.0,0.6);
			\draw[black, thick] (0,0)--(-0.3,0.6);
			
			\filldraw[thick, fill = white] (0,0) circle (.25cm) node at (1) {$g_0$};
			\filldraw[thick, fill=white] (-1.7,-2) circle (.25cm) node at (2) {$g_1$};
			\filldraw[thick, fill = white] (-0.2,-2) circle (.25cm) node at (3) {$g_2$};
			\filldraw[thick, fill = white] (1.7,-2) circle (.25cm) node at (6) {$g_k$};
			
			\node (7) at (0,0.75) {$\overbrace{}$};
			\node (8) at (0,1.1) {$N_0$};
		\end{tikzpicture}
		\caption{Star-shaped graph with leaves at the root.} \label{tree2} 
	\end{figure}
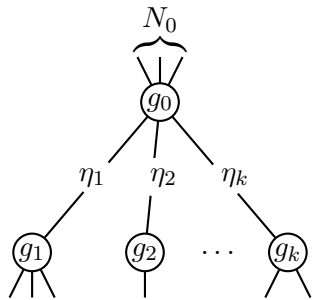

	Let $F^\mathrm{ord}_{\Gamma}$ be the space associated to $F_\Gamma$ by putting a standard order (i.e., the order which is in agreement with genus labels of $\Gamma$) on rational tails  of degree $d_0$ and on the ramification points over nodes. Then the space $F^\mathrm{ord}_{\Gamma}$ is a $\BZ_k$-gerbe over the following product, 
	\[ 
	F^\mathrm{ord}_{\Gamma} \rightarrow \widetilde{M}^{\epsilon_+}_{g_0,n_0}(\CX, \underline{\mu}, \underline{\eta})\times \prod^{k}_{i=1} V^{\BC^*}_{g_i, n_i, \eta^{i}}. 
	\]

	\subsection{Localisation on the master space} \label{Sectionloc} The master space admits a forgetful morphism, 
	\[ 
	\pi \colon M\Mbar^{\epsilon_0}_{g,n}(\CX,\underline{\mu}) \rightarrow \Mbar_{g,n}(\CX, \underline{\mu}),
	\]
	given by applying stabilisation to all rational tails of target curves, composing maps with the stabilisation morphism, and stabilising the source curves. We could go even further down to $\Mbar_{g,n+\sum \ell(\mu^i)}$, but pullbacks of $\psi$-classes from $\Mbar_{g,n+\sum \ell(\mu^i)}$ and relative $\psi$-classes on $M\Mbar^{\epsilon_0}_{g,n}(\CX,\underline{\mu})$ do not agree in general due to ramifications away from the marked points on the target. These are retained in $\Mbar_{g,n}(\CX, \underline{\mu})$ but forgotten in $\Mbar_{g,n+\sum \ell(\mu^i)}$. Nevertheless, they do agree in the case of admissible covers, which will allow us to consider the forgetful morphism to $\Mbar_{g,n+\sum\ell(\mu^i)}$ in the final chamber of the wall-crossing. 
	
		\begin{remark}   Using forgetful morphisms to $\Mbar_{g,n+\sum \ell(\mu^i)}$ for all walls is also possible. In this case, one needs to insert $\psi$-classes before applying pushforwards. This, however, will require a more clumsy notation in the considerations that follow. 
	\end{remark}
	
	By applying the localisation formula \cite{GP} and pushing forward via $\pi$, we obtain the following relation in  $H^*(\Mbar_{g,n}(\CX, \underline{\mu}))[z^\pm]$, 
	\begin{multline*}
		[M\Mbar^{\epsilon_0}_{g,n}(\CX,\underline{\mu})]^{\mathrm{vir}}\cap A \\ 
		= \pi_*\left( \frac{[\Mbar^{\epsilon_-}_{g,n}(\CX,  \underline{\mu})]^{\mathrm{vir}}\cap A}{-z+\mathrm{c}_1(L_-)}\right)+\pi_*\left(\frac{[\Mbar^{\epsilon_+}_{g,n}(\CX,  \underline{\mu})]^{\mathrm{vir}}\cap A}{z+\mathrm{c}_1(L_+)}\right)+\sum_{w(\Gamma)=d_0} \pi_*\left(\frac{[F_{\Gamma}]^{\mathrm{vir}}\cap A}{e_{\BC^*}(N^\mathrm{vir})}\right), 
	\end{multline*}
	such that we expand rational functions in $z$ in the range $|z| >1$. The class $A$ is an insertion associated to markings on the source and branching points on the target, 
	\[ 
	A:=\ev^*(\gamma_1\boxtimes \dots \boxtimes \gamma_n) \cdot  \tilde{\ev}^*(\tilde{\gamma}_1 \boxtimes  \dots \boxtimes \tilde{\gamma}_m )\cdot \rho^*(\alpha).
	\]
	Since $M\Mbar^{\epsilon_0}_{g,n}(\CX,\underline{\mu})$ is proper by \cite[Theorem 6.13]{NuG}, the left-hand side of the relation above does not involve negative powers of $z$. After taking the residue at $z=0$, i.e., the coefficients of $\frac{1}{z}$, we therefore obtain 
	\[ 
	\pi_* \left([\Mbar^{\epsilon_-}_{g,n}(\CX,  \underline{\mu})]^{\mathrm{vir}}\cap A \right)= \pi_*\left( [\Mbar^{\epsilon_+}_{g,n}(\CX,  \underline{\mu})]^{\mathrm{vir}}\cap A \right)+\sum_{w(\Gamma)=d_0} \mathrm{Res}_{z} \pi_*\left(\frac{[F_{\Gamma}]^{\mathrm{vir}}\cap A}{e_{\BC^*}(N^\mathrm{vir})}\right),
	\]
	it remains to analyse the residue of the wall-crossing components $F_{\Gamma}$; note that since we are taking the residue, a precise form of lines bundles $L_-$ and $L_+$ is not required. 
	\subsection{Residue} By the analysis of the obstruction theory of the fixed components from \cite[Proposition 6.19]{NuG}, we have
	\begin{multline} \label{equation1}
		\mathrm{Res}_z\pi_*\left(\frac{[F_{\Gamma}]^{\mathrm{vir}}\cap A}{e_{\BC^*}(N^\mathrm{vir})}\right)=\\
		\mathrm{Res}_z\frac{1}{\Aut(\Gamma)}\cdot\frac{1}{ \Aut(\underline{\eta})}\cdot \mathrm{gl}_{\Gamma*}\left(\pi_*\left(\frac{[\widetilde{M}^{\epsilon_+}_{g_0,n_0}(X, \underline{\mu}, \underline{\eta})]^{\mathrm{vir}}\cap A_\Gamma)}{-z-\sum^{\infty}_{i=0}D_i} \right) \boxtimes \prod^{k}_{i=1} \mathsf{I}_{g_i,n_i,\eta^{i}}(z- \tilde{\psi_i})\right), \end{multline}
	where $D_i$ are certain divisor classes on $\widetilde{M}^{\epsilon_+}_{g_0,n_0}(X, \underline{\mu}, \underline{\eta})$, which are defined in \cite[Defintion 6.2]{NuG}; we will not specify what they are but instead state the result of their contribution to the residue in (\ref{res}). The class $A_\Gamma$ is derived from $A$, such that some insertions via markings on the source become insertions via branching points on the target, depending on whether these markings are on the root or a non-root vertex in a graph $\Gamma$,
	\begin{equation*} 
		A_\Gamma=  \ev^*( \boxtimes_{i\in N_0}\gamma_i) \cdot \tilde{\ev}^*(\tilde{\gamma}_1 \boxtimes  \dots \boxtimes  \tilde{\gamma}_m)\cdot \tilde{\ev}'^*(\gamma_\Gamma)\cdot \rho^*(\alpha),
	\end{equation*}
	where $\tilde{\ev}'$ is the evaluation map associated to the branching points with the ramification profile $\underline{\eta}$; the class $\gamma_\Gamma$ is defined as in (\ref{classGamma}).  The factor $\Aut(\Gamma)$ is due to the ordering of rational tails and ramification points in $F_\Gamma$. Lastly, the classes $\tilde{\psi}_i$ in the expression above are associated to nodes of target curves.  On moduli spaces of maps with relative points they admit an identification with $\psi$-classes on the source via the formula involving a part of the partition  given in the end of Section \ref{Star}.

	For the purpose of obtaining simpler combinatorial factors in the next formula, it is more convenient to put arbitrary orders on non-root vertices of graphs. This amounts to summing over all ordered graphs with factorial factors. We will denote a graph with an arbitrary order by $\Gamma_{\mathrm{ord}}$.  Hence using the analysis presented in the proof of  \cite[Theorem 7.3.3]{YZ}, we obtain that 
	\begin{multline} \label{res}
		\sum_{w(\Gamma)=d_0} \mathrm{Res}_{z} \pi_*\left(\frac{[F_{\Gamma}]^{\mathrm{vir}}\cap A}{e_{\BC^*}(N^\mathrm{vir})}\right) = 
		\sum_{w(\Gamma_\mathrm{ord})=d_0} \sum^{k-1}_{r=0} \sum_{\underline{b}} \frac{(-1)^r}{r!(k-r)!}\cdot \frac{1}{\Aut(\underline{\eta})}\cdot \\ \mathrm{gl}_{\Gamma_\mathrm{ord}*}\bigg(\pi_*\left( [\Mbar^{\epsilon_+}_{g_0,n_0}(\CX, \underline{\mu}, \underline{\eta})]^{\mathrm{vir}} \cap  A_{\Gamma_{\mathrm{ord}}} \right)
		\boxtimes   \prod^k_{i=1} [\mathsf{I}_{g_i,n_i,\eta^{i}}(z- \tilde{\psi_i})] _{b_\ell}\bigg), 
	\end{multline}
	where  we sum over ordered graphs $\Gamma_{\mathrm{ord}}$  of weight $d_0$ with $k$ non-root vertices; $\underline{b}$ runs through $k$-tuples of integers such that $b_1 + \hdots+ b_k=0$ and $b_{k-r+1},\hdots, b_k<0$; $[\mathsf{I}_{g_i,n_i,\eta^i}(z- \tilde{\psi_i})] _{b_\ell}$ is the coefficient of $\mathsf{I}_{g_i,n_i,\eta^i}(z- \tilde{\psi_i}) $ at $z^{b_\ell}$. 
	
		\begin{remark} To obtain (\ref{res}) from (\ref{equation1}), it is also necessary to consider the wall-crossing for $(h,m)=(0,1)$, which is presented in the proof of Theorem \ref{ELSV}. The reason is that the contributions of $D_i$ are expressible in terms of polar parts of $I$-functions, as explained in \cite[Lemma 7.2.1]{YZ}. 
	\end{remark}

	By noticing that the terms with $b_\ell<0$ cancel out due to the sign (see the end of the proof of \cite[Theorem 7.3.3]{YZ}), we obtain that (\ref{res}) simplifies to 
	\begin{equation} \label{onewall0}
		\sum_{w(\Gamma_{\mathrm{ord}})=d_0}  \frac{1}{k!}\cdot \frac{1}{ \Aut(\underline{\eta})}\cdot \mathrm{gl}_{\Gamma_\mathrm{ord}*}\bigg(\pi_*\left( [\Mbar^{\epsilon_+}_{g_0,n_0}(\CX, \underline{\mu}, \underline{\eta})]^{\mathrm{vir}} \cap  A_{\Gamma_{\mathrm{ord}}} \right)  \boxtimes   \prod^k_{i=1} [\mathsf{I}_{g_i,n_i,\eta^{i}}(z- \tilde{\psi_i})] _{0}\bigg),
	\end{equation}
	such that
	\[[\mathsf{I}_{g_i,n_i,\eta^{i}}(z- \tilde{\psi_i})] _{0}=\mathsf{I}_{g_i,n_i,\eta^{i}}(-\tilde{\psi_i}), \]
	where negative powers of $\psi$-classes are set to be zero.

	\subsection{Crossing all walls} \label{allwalls}  By removing the order on graphs in (\ref{onewall0}), we obtain the following relation in $H^*(\Mbar_{g,n}(\CX, \underline{\mu}))$:
	\begin{multline} \label{onewall}
		\pi_* \left([\Mbar^{\epsilon_-}_{g,n}(\CX,  \underline{\mu})]^{\mathrm{vir}}\cap A \right)= \pi_*\left( [\Mbar^{\epsilon_+}_{g,n}(\CX,  \underline{\mu})]^{\mathrm{vir}}\cap A \right) \\
		+ \sum_{w(\Gamma)=d_0}\  \frac{1}{\Aut(\Gamma)}\cdot\frac{1}{\Aut(\underline{\eta})}\cdot \mathrm{gl}_{\Gamma*} \bigg(\pi_*\left( [\Mbar^{\epsilon_+}_{g_0,n_0}(\CX, \underline{\mu}, \underline{\eta})]^{\mathrm{vir}} \cap  A_\Gamma \right)  \boxtimes   \prod^k_{i=1} \mathsf{I}_{g_i,n_i,\eta^{i}}(- \tilde{\psi_i})\bigg).
	\end{multline}
	We apply this formula to all walls on the way from $\epsilon \ll1$ to $\epsilon >1$. In particular, we inductively express the wall-crossing summands in the formula using the same formula for the next wall. The result is Theorem \ref{maintheorem} but on $\Mbar_{g,n}(\CX, \underline{\mu})$. 
	
	To obtain the statement of Theorem  \ref{maintheorem} on $\Mbar_{g,n+\sum \ell(\mu^i)}$, we pushforward classes via the forgetful morphism $\Mbar_{g,n}(\CX, \underline{\mu}) \rightarrow \Mbar_{g,n+\sum \ell(\mu^i)}$. Over loci of admissible covers,  this no longer involves stabilisation of curves, since source curves of admissible covers together with ramification points are stable. Hence restrictions of $\psi$-classes to Hurwitz cycles agree on both spaces.  In the process of crossing all walls, we obtain graphs without any markings on the root vertex and whose root genus $g_0$ is determined by the partition labels on the edges. 
	%We can make the formula slightly more explicit by applying Lemma \ref{empty}. For short we denote the partition $(2,1^{d-2})$ by $(2)$. It will also reveal a very improtant feature of the wall-crossing formula, namely, the summation over simple ramifications. 

	%\begin{corollary}\label{cor: points_kill_each_other} We have
	%	\begin{multline*}
		%	\langle \tau_{k_1}(\gamma_1) \dots \tau_{k_n}(\gamma_n)  \mid \widetilde{\tau}_{k'_1}(\gamma'_1)   \dots \widetilde{\tau}_{k'_m}(\gamma'_m) \rangle^-_{\underline{\eta}}\\
		%	=\sum_{\vv{g}} \Big\langle \emptyset \ \Big| \ \ \widetilde{\tau}_{k'_1}(\gamma'_1)   \dots \widetilde{\tau}_{k'_m}(\gamma'_m) \cdot  \prod^{k}_{i=1} I_{g_i,\eta'^{i}}\Big( \prod_{j\in N_i} \tau_{k_j}(\gamma_j) \Big), \mathbbm{1}^b \Big\rangle^{+}_{\underline{\eta}, \underline{\eta}', (2)^b}\frac{1}{k!b!},
		%	\end{multline*}
	%	where we sum over integers $b\in \BZ_{>0}$ and partitions $\vv{g}=((g_1,N_1, \eta'^{1}),\dots,(g_k,N_k,\eta'^{k}))$ for which $N_i\neq \emptyset$.
	%\end{corollary}

	\section{Degree one maps} \label{section_one}
	\subsection{Degree one maps}For $d=1,n=0$, the wall-crossing formula takes a particularly simple form, as the only contributing $I$-function is $\mathsf{I}_{g_i, (1)}(z)$. 
	\begin{corollary} \label{cor_one} Assume $d=1$, $n=0$. Then
		\[ 
		\mathsf{GW}_{g}=\sum_{(g_1,\dots,g_k)} \frac{1}{\Aut(g_1,\dots, g_k)}\cdot \mathrm{gl}_{*}\left( [\Mbar^\circ_{h,k}] \boxtimes \prod^{k}_{i=1} \mathsf{I}_{g_i, (1)}(- \tilde{\psi_i})\right), 
		\]
		such that $g_i>0$, $g=h+ \sum^k_{i=1} g_i$ and 
		\[ 
		\mathsf{I}_{g_i, (1)}(z)=\frac{\Lambda^\vee(z)}{z-\psi_1} \in H^*(\Mbar^\circ_{g_i, 1})[z^\pm]. 
		\] 
	\end{corollary}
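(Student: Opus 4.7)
The plan is to obtain the corollary as a direct specialization of Theorem \ref{maintheorem} to $d=1$, $n=0$, empty $\underline{\mu}$, empty $\underline{\gamma}$, and $\alpha=1$. First I would enumerate the contributing graphs. Since $d=1$, the only partition of $d$ is $(1)$, so every edge label $\eta^i$ equals $(1)$, giving $\ell(\eta^i)=1$ and $\Aut(\underline{\eta})=1$. With $n=0$ there are no leaves, so the graph $\Gamma$ is determined purely by the unordered multiset of genus labels $(g_1,\dots,g_k)$, and $\Aut(\Gamma)=\Aut(g_1,\dots,g_k)$. The stability condition (\ref{stability}) reduces to $2g_i>0$, i.e.\ $g_i\geq 1$. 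The Riemann--Hurwitz identity gives $2g_0-2 = 2h-2$, so $g_0=h$, and the genus identity becomes $g=h+\sum_{i=1}^k g_i$.

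Next I would analyse the Hurwitz factor. In degree $d=1$, every admissible cover of a nodal curve is an isomorphism of marked nodal curves; hence $\overline{H}(\CX,\emptyset,\underline{\eta})$ is canonically identified with $\Mbar^\circ_{h,k}$ via $\rho$, with $g_0=h$ and $\sum\ell(\eta^i)=k$. Combined with $\Aut(\underline{\mu})=1$, this gives $\mathsf{H}_{\emptyset,\underline{\eta}}(\emptyset;1) = [\Mbar^\circ_{h,k}]$ viewed inside $\Mbar_{h,k}$. For the $I$-function, the $\BC^*$-fixed locus $V_{g_i,0,(1)}^{\BC^*}$ is automatically connected because $\ell(\eta)=1$ forces only one $\BP^1$-tube, so the decomposition (\ref{decomposition}) has a single term and Proposition \ref{localisation_formula} yields
\[
\mathsf{I}^\circ_{g_i,0,(1)}(z) = z^{-1}\cdot\frac{\Lambda^\vee(z)}{z-\psi_1},
\qquad
\mathsf{I}_{g_i,0,(1)}(z) = z\cdot \mathsf{I}^\circ_{g_i,0,(1)}(z) = \frac{\Lambda^\vee(z)}{z-\psi_1},
\]
matching the formula in the statement.

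Substituting these ingredients into the wall-crossing formula of Theorem \ref{maintheorem} reproduces the claimed identity verbatim. No significant obstacle arises; the only mildly delicate point is confirming that the degree-$1$ admissible cover moduli space agrees with its base (so that its fundamental class pushes forward to $[\Mbar^\circ_{h,k}]$), which is immediate because a degree-$1$ admissible cover is literally an isomorphism of marked nodal curves.
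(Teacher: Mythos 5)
Your proposal is correct and takes essentially the same route as the paper: specialize Theorem \ref{maintheorem} to $d=1$, $n=0$, identify the degree-one admissible cover space with $\Mbar^\circ_{h,k}$, observe that graphs reduce to multisets of positive genera, and evaluate the $I$-function via Proposition \ref{localisation_formula}. Your write-up merely spells out the bookkeeping (stability, Riemann--Hurwitz, connectedness of the fixed locus) that the paper's one-line proof leaves implicit.
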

	
	\begin{proof}
		This follows directly from Theorem \ref{maintheorem} and the observation that the space of admissible covers of degree one to $\CX \rightarrow \Mbar^\circ_{h,0}$ is isomorphic to $\Mbar^\circ_{h,0}$. Graphs with labels $(1)$ on edges correspond to $k$-tuples of integers. By Proposition \ref{localisation_formula}, we obtain an expression for $I$-functions. 
	\end{proof}
	
	Moduli spaces of degree one maps to the moving elliptic curve were considered in \cite{COP} in connection to the moduli space of abelian varieties $\CA_g$ and the Torelli map $\mathrm{Tor} \colon \CM^{\mathrm{ct}}_g \rightarrow \CA_g$. The fiber product of $\CA_1\times \CA_{g-1}$ with respect to the Torelli map is isomorphic to the moduli space of stable maps of degree one to the moving elliptic curve, such that the pullback class and the virtual fundamental class match.  Hence Corollary \ref{cor_one} provides a more effective method for computing $\mathrm{Tor}^*[\CA_1\times \CA_{g-1}]$ than the one used in \cite[Section 6]{COP}. We examine the case of the moving elliptic curve in greater detail below.  

\subsection{Genus three}  Let $d=1$, $(g,n)=(3,0)$ and $(h,m)=(1,0)$, i.e., we are in the situation of the moving elliptic curve without markings.  Figure \ref{tree4} depicts graphs involved in this wall-crossing.

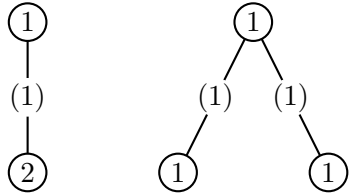
\begin{figure}[!ht]
	\centering
	\begin{tikzpicture}		
	
		\node (1) at (-3,0) {};
		\node (2) at (-3,-2) {}; 
		
		\draw[black, thick] (-3,0)--(-3,-2);
		
		\filldraw[thick, fill = white] (-3,0) circle (.25cm) node at (1) {$1$};
		\filldraw[thick, fill=white] (-3,-2) circle (.25cm) node at (2) {$2$};
		
		\filldraw[ fill=white,draw=white] (-3,-1) circle (.25cm);
		\node at (-3,-1) {$(1)$};
		
		\node (1) at (0,0) {};
		\node (2) at (-1,-2) {}; 
		\node (3) at (1,-2) {}; 
		
		\draw[black, thick] (0,0)--(-1,-2);
		\draw[black, thick] (0,0)--(1,-2);
		
		\filldraw[thick, fill = white] (0,0) circle (.25cm) node at (1) {$1$};
		\filldraw[thick, fill=white] (-1,-2) circle (.25cm) node at (2) {$1$};
		\filldraw[thick, fill=white] (1,-2) circle (.25cm) node at (3) {$1$};
		
		\filldraw[ fill=white,draw=white] (-0.5,-1) circle (.25cm);
		\node at (-0.5,-1) {$(1)$};
		
		\filldraw[ fill=white,draw=white] (0.5,-1) circle (.25cm);
		\node at (0.5,-1) {$(1)$};
	\end{tikzpicture}\caption{Graphs for $d=1$, $(g,n)=(3,0)$.} \label{tree4} 
\end{figure}	

There are just two  $I$-functions in this case: 
\begin{align} \label{g3}
	\begin{split}
		&\hspace{-3cm} \mathsf{I}_{1,(1)}(z)=\frac{z-\lambda_1}{z-\psi_1}=\mathbbm{1}+\mathrm{O}(z^{-1})\\
		&\hspace{-3cm} \mathsf{I}_{2, (1)}(z)= \frac{z^2-\lambda_1 z+ \lambda_2}{z-\psi_1 }=z -\lambda_1+ \psi_1+ \mathrm{O}(z^{-1}),
	\end{split}
\end{align}
 \noindent By Corollary \ref{cor_one}, 
\[
\mathsf{GW}_3=-[\tilde{\psi}_1,\mathbbm{1}  ]_{1,2}-[\mathbbm{1}, \lambda_1]_{1,2}+[\mathbbm{1}, \psi_1]_{1,2}+\frac{1}{2}[\mathbbm{1}, \mathbbm{1},\mathbbm{1}]_{1^3},
\]	
where $[ \alpha_1, \dots, \alpha_k ]_{g_0, g_1,\hdots,g_k}=\mathrm{gl}_* (\alpha_0 \boxtimes \alpha_1 \boxtimes \dots\boxtimes \alpha_k$). Over the locus of compact-type curves $M^\mathrm{ct}_{3}\subset \Mbar_{3}$, using \texttt{admcycles}  \cite{admcycles}, the expression above yields
\[ 
\mathsf{GW}_{3|M^\mathrm{ct}_{3}}=24\lambda_2,
 \]
 in agreement with \cite{COP}. 
 
 \subsection{Genus four} Let $(g,n)=(4,0)$.   Figure \ref{tree5} depicts graphs involved in this wall-crossing. 

 \begin{figure}[H] 
 	\centering 
 	\begin{tikzpicture}		
 		
 		\node (1) at (-3,0) {};
 		\node (2) at (-3,-2) {}; 
 		
 		\draw[black, thick] (-3,0)--(-3,-2);
 		
 		\filldraw[thick, fill = white] (-3,0) circle (.25cm) node at (1) {$1$};
 		\filldraw[thick, fill=white] (-3,-2) circle (.25cm) node at (2) {$3$};
 		
 		\filldraw[ fill=white,draw=white] (-3,-1) circle (.25cm);
 		\node at (-3,-1) {$(1)$};
 		
 		\node (1) at (0,0) {};
 		\node (2) at (-1,-2) {}; 
 		\node (3) at (1,-2) {}; 
 		
 		\draw[black, thick] (0,0)--(-1,-2);
 		\draw[black, thick] (0,0)--(1,-2);
 		
 		\filldraw[thick, fill = white] (0,0) circle (.25cm) node at (1) {$1$};
 		\filldraw[thick, fill=white] (-1,-2) circle (.25cm) node at (2) {$1$};
 		\filldraw[thick, fill=white] (1,-2) circle (.25cm) node at (3) {$2$};
 		
 		\filldraw[ fill=white,draw=white] (-0.5,-1) circle (.25cm);
 		\node at (-0.5,-1) {$(1)$};

 		\filldraw[ fill=white,draw=white] (0.5,-1) circle (.25cm);
 		\node at (0.5,-1) {$(1)$};
 		
 		%%%
 		
 		\node (1) at (4,0) {};
 		\node (2) at (4,-2) {}; 
 		\node (3) at (3,-2) {}; 
 		\node (4) at (5,-2) {}; 
 		
 		\draw[black, thick] (4,0)--(4,-2);
 		\draw[black, thick] (4,0)--(3,-2);
 		\draw[black, thick] (4,0)--(5,-2);
 		
 		\filldraw[thick, fill = white] (4,0) circle (.25cm) node at (1) {$1$};
 		\filldraw[thick, fill=white] (4,-2) circle (.25cm) node at (2) {$1$};
 		\filldraw[thick, fill=white] (3,-2) circle (.25cm) node at (3) {$1$};
 		\filldraw[thick, fill=white] (5,-2) circle (.25cm) node at (4) {$1$};
 		
 		\filldraw[ fill=white,draw=white] (4,-1) circle (.25cm);
 		\node at (4,-1) {$(1)$};
 		
 		\filldraw[ fill=white,draw=white] (3.4,-1) circle (.25cm);
 		\node at (3.4,-1) {$(1)$};
 		
 		\filldraw[ fill=white,draw=white] (4.6,-1) circle (.25cm);
 		\node at (4.6,-1) {$(1)$};
 	\end{tikzpicture}\caption{Graphs for $d=1$, $(g,n)=(4,0)$.} \label{tree5} 
 \end{figure}

In addition to (\ref{g3}), there is one more $I$-function:
 \begin{align*}	
 	&\hspace{1cm}\mathsf{I}_{3, (1)}(z)= \frac{z^3-\lambda_1 z^2+ \lambda_2z-\lambda_3}{z-\psi_1}=z^2 -\lambda_1z+\lambda_2+ \psi_1z -\lambda_1 \psi_1+ \psi_1^2+\mathrm{O}(z^{-1}). 
 \end{align*}

\noindent By Corollary \ref{cor_one}, 
\begin{align*}
\mathsf{GW}_4&=[\tilde{\psi}_1^2,\mathbbm{1}]_{1,3}+[\tilde{\psi}_1,\lambda_1]_{1,3}+[\mathbbm{1},\lambda_2]_{1,3}- [\tilde{\psi}_1,\psi_1]_{1,3}+[\mathbbm{1},\psi_1\lambda_1]_{1,3}++[\mathbbm{1},\psi_1^2]_{1,3} \\
&-[\tilde{\psi}_1,\mathbbm{1},\mathbbm{1} ]_{1^2,2}-[\mathbbm{1},\mathbbm{1}, \lambda_1]_{1^2,2}+[\mathbbm{1},\mathbbm{1}, \psi_1]_{1^2,2}+\frac{1}{6}[\mathbbm{1}, \mathbbm{1},\mathbbm{1}, \mathbbm{1}]_{1^4}.
\end{align*}
Over the locus of compact-type curves $M^\mathrm{ct}_{4}\subset \Mbar_{4}$, using  \texttt{admcycles} \cite{admcycles}, the expression above yields  
\[ 
\mathsf{GW}_{4|M^\mathrm{ct}_{4}}=20\lambda_3,
\]
also in agreement with \cite{COP}. 
Observe that,  in contrast  to \cite{COP}, we sum only over star-shaped graphs, and the wall-crossing formula holds over the entire $\Mbar_g$. 
	\section{Degree two maps}
	\subsection{$I$-functions in degree two} For $d=2,n=0$, there will be  two types of $I$-functions contributing to the wall-crossing formula: 
	\begin{itemize}
		\item $\mathsf{I}_{g, (2)}(z)$, which corresponds to attaching a connected curve of genus $g$ to a simple ramification of admissible covers.
		\item $\mathsf{I}_{g, (1^2)}(z)$, which corresponds to attaching a possibly disconnected curve of genus $g$ to conjugate points of admissible covers.\footnote{Points in a fiber of a degree two admissible cover away from branching points.}
	\end{itemize}
	Recall that in (\ref{decomposition}) we allow  $\Mbar^\circ_{0,1}$, $\Mbar^\circ_{0,2}$.    In particular, a special role is played by the $I$-function for  
	$(g,\eta)=(0,(2))$, 
	\[
	\mathsf{I}_{0,(2)}(z)=\mathbbm{1},
	\]
	which when $n=0$ corresponds just to a simple ramification on admissible covers without any components attached to it. 
	
	\subsection{Relative projective line} Let us focus on maps to $\p^1$ relative to two points $\{0,\infty\} \subset \p^1$ up to  $\BC^*$-scaling that fixes these points. This corresponds to $(h,m)=(0,2)$, i.e., the universal curve over $\Mbar^\circ_{0,2}$. We fix the ramification profiles over $\{0,\infty\}$ to be $(2)$. On the Gromov--Witten side, we have moduli spaces
	\[\Mbar_{g, (2)^2}(\p^1/\{0,\infty\})^\sim,  \]
	whose associated cycles are Double ramification cycles \cite{JPPZ}, 
	\[ 
	\mathsf{GW}_{g,(2)^2}=\mathsf{DR}_g(2,-2) \in H^*(\Mbar_{g,2}), 
	\]
such that simple ramifications over $\{0,\infty\}$ are marked.
	
	On the Hurwitz side, we have moduli spaces of admissible covers with ramification profiles given by partitions $(1,1)$ and $(2)$,
	\[
	\overline{H}_{(1,1)^{v}, (2)^{w+2}}(\p^1/\{0,\infty\})^\sim.
	\] 
The associated cycles are cycles of loci of Hyperellipic curves, such that all Weierstrass points and some conjugate hyperelliptic points are marked,  
	\[
	\mathsf{H}_{(1,1)^{v}, (2)^{w+2}}=\mathsf{Hyp}_{g_0,w+2, 2v }\in H^*(\Mbar_{g_0, w+2+2v}), 
	\]
	where $g_0=\frac{w}{2}$. 
	
	Let us also introduce a shortened notation for Hyperelliptic cycles without markings on Weierstrass points away from $\{0,\infty\}$, which in the language of $I$-functions corresponds to inserting $\mathsf{I}_{0,(2)}=\mathsf{I}_{0,(2)}(z)=\mathbbm{1}$ at all simple ramifications, 
	\begin{align*}
		\mathsf{Hyp}_{g}&:=\frac{1}{(2g)!}\cdot  \mathrm{gl}_{\Gamma*}\left(\mathsf{Hyp}_{g,2g+2,0}\boxtimes \prod^{2g}_{i=1} \mathsf{I}_{0,(2)}\right) \\
		&\hspace{0.1cm}=\frac{1}{(2g)!} \cdot \mathrm{fg}_*(\mathsf{Hyp}_{g,2g+2,0}) \in H^*(\Mbar_{g_0,2}),
	\end{align*}
	where  $\Gamma$ is the graph with the label $g$ at the root vertex, $0$ at non-root vertices and  $2g$ edges whose labels are $(2)$, and $\mathrm{fg} \colon \Mbar_{g,2g+2} \rightarrow \Mbar_{g,2}$ is the forgetful map.
	
	The wall-crossing  relates Double ramification and Hyperelliptic cycles, providing a recursive formula which expresses $\mathsf{Hyp}_{g}$ in terms of $\mathsf{DR}_g(2,-2)$, $ \mathsf{I}_{g_i, \eta^i}(z)$ and $\mathsf{Hyp}_{g_0,w+2, 2v }$ for $g_0< g$. Note that the attachment of $I$-functions at simple ramifications takes place away from $\{0,\infty\}$.
\pagebreak
	
	\begin{corollary} \label{DoubleHyp}  We have
		\begin{multline*}
			\mathsf{Hyp}_{g}
			=\mathsf{DR}_g(2,-2)\\- \sum_{\Gamma} \frac{1}{\Aut(\Gamma)} \cdot \mathrm{gl}_{\Gamma*}\left(\mathsf{Hyp}_{g_0,w+2, 2v } \boxtimes \prod^{v}_{i=1}  \mathsf{I}_{g_i, (1,1)}(-\tilde{\psi}_i) \boxtimes \prod^{v+w}_{i=v+1}  \mathsf{I}_{g_i, (2)}(-\tilde{\psi}_i) \right),
		\end{multline*}
		such that sum is taken over graphs subject to the following condition:
		\begin{align*}
			g=\sum^{v+w}_{i=1}(g_i+\ell(\eta^{i}))+ g_0-v-w, \quad g_0=\frac{w}{2},
		\end{align*} 
		where $v$ is the number of edges of the graph whose label is $(1,1)$, while $w$ is the number of edges whose label is $(2)$. We exclude the graph with $2g$ edges labelled by $(2)$, as the corresponding term was moved to the left-hand side of the equation. 
	\end{corollary}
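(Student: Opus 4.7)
The plan is to apply Theorem \ref{maintheorem} directly with the parameters $d=2$, $n=0$, $(h,m)=(0,2)$ and $\underline{\mu}=((2),(2))$, and then isolate one distinguished graph from the resulting sum. In this setting the left-hand side of the main formula is $\mathsf{GW}_{g,(2)^2} = \mathsf{DR}_g(2,-2)$, while the right-hand side runs over star-shaped graphs $\Gamma$ whose edge labels are partitions of $2$, hence either $(2)$ or $(1,1)$. Writing $v$ for the number of $(1,1)$-edges and $w$ for the number of $(2)$-edges, the numerical constraints in Theorem \ref{maintheorem} force $g_0 = w/2$ and $g = \sum g_i + v + g_0$; this matches precisely the indexing set appearing in the statement.

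Next, I would identify the Hurwitz cycle attached to such a $\Gamma$: by definition it equals $\mathsf{H}_{((2),(2)),((1,1)^v,(2)^w)} = \mathsf{Hyp}_{g_0,w+2,2v}$ on $\Mbar_{g_0,w+2+2v}$, where the $w+2$ markings record Weierstrass points (including the two relative ones) and the $2v$ markings record conjugate hyperelliptic pairs. With this substitution Theorem \ref{maintheorem} becomes
\[
\mathsf{DR}_g(2,-2) = \sum_\Gamma \frac{1}{\Aut(\Gamma)}\, \mathrm{gl}_{\Gamma*}\!\left(\mathsf{Hyp}_{g_0,w+2,2v} \boxtimes \prod_{i=1}^{v}\mathsf{I}_{g_i,(1,1)}(-\tilde{\psi}_i) \boxtimes \prod_{i=v+1}^{v+w}\mathsf{I}_{g_i,(2)}(-\tilde{\psi}_i)\right).
\]

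The main reduction is to single out the unique graph $\Gamma_0$ with $v=0$, $w=2g$, $g_0 = g$ and all $g_i = 0$. By Proposition \ref{localisation_formula} applied to $(g,n,\eta)=(0,0,(2))$ one computes $\mathsf{I}^\circ_{0,0,(2)}(z) = z^{-1}$, and the overall $z$-prefactor in Definition \ref{defnI} gives $\mathsf{I}_{0,(2)}(z) = \mathbbm{1}$. Since all $2g$ non-root vertices of $\Gamma_0$ carry the identical triple $(0,\emptyset,(2))$, we have $\Aut(\Gamma_0) = (2g)!$, and the gluing map on the unstable components $\Mbar^\circ_{0,1}$ is the forgetful map in the sense of the excerpt. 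Thus the $\Gamma_0$-summand collapses to $\frac{1}{(2g)!}\,\mathrm{fg}_*(\mathsf{Hyp}_{g,2g+2,0})$, which is exactly the definition of $\mathsf{Hyp}_g$. Transposing this term to the left-hand side yields the stated formula.

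The main obstacle I expect is purely combinatorial: verifying that the factor $\frac{1}{\Aut(\underline{\eta})}=\frac{1}{2^v}$ absorbed inside $\mathsf{H}_{\underline{\mu},\underline{\eta}}$ combines correctly with $\Aut(\Gamma)$ to reproduce the single $\frac{1}{\Aut(\Gamma)}$ prefactor appearing in the statement, and to confirm that $\Gamma_0$ is indeed the only graph whose contribution can be recognised as $\mathsf{Hyp}_g$ with no further non-trivial $I$-function decorations (this uniqueness follows from $g_0 = w/2$ together with $g = \sum g_i + v + g_0$ once one demands all $I$-function insertions to be trivial). Beyond this bookkeeping, no geometric input beyond Theorem \ref{maintheorem} and Proposition \ref{localisation_formula} is required.
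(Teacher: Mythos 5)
Your proposal is correct and follows essentially the same route the paper intends: apply Theorem \ref{maintheorem} with $d=2$, $n=0$, $(h,m)=(0,2)$, $\underline{\mu}=((2),(2))$, identify $\mathsf{GW}_{g,(2)^2}=\mathsf{DR}_g(2,-2)$ and $\mathsf{H}_{\underline{\mu},\underline{\eta}}=\mathsf{Hyp}_{g_0,w+2,2v}$, and then move the unique $\Gamma_0$-term (all $(2)$-edges, all $g_i=0$, $\Aut(\Gamma_0)=(2g)!$, each $\mathsf{I}_{0,(2)}=\mathbbm{1}$) to the left-hand side using the definition of $\mathsf{Hyp}_g$. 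Your combinatorial checks---that $g_0=w/2$ and $g=\sum g_i+v+g_0$ force $v=0$ and $\sum g_i=0$ when $w=2g$, and that the $\Aut(\underline{\eta})$ factor is already absorbed into the Hurwitz cycle by definition---are all correct, so there is no gap.
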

	
	%By the results of \cite{EH}, the marked Hyperelliptic cycles can be determined from the unmarked ones. A formula for Double ramification cycles was derived in \cite{JPPZ}. 
	
	\subsection{Genus three} Let $d=2, (g,n)=(3,0)$. Using Proposition \ref{localisation_formula} and (\ref{decomposition}), we can easily determine the relevant $I$-functions. Notice that $I$-functions associated to the partition $(1^2):=(1,1)$ have several contributions depending on different connected components of $V_{g,n,\eta}$ which appear in the decomposition (\ref{decomposition1}).

Figure \ref{tree3} depicts the graphs involved in this wall-crossing. For simplicity, we do not include vertices associated to $\mathsf{I}_{0, (2)}(z)$, as their number can be deduced from other labels. We also omit those graphs whose $I$-functions are purely polar, as they do not contribute to the wall-crossing formula. 

	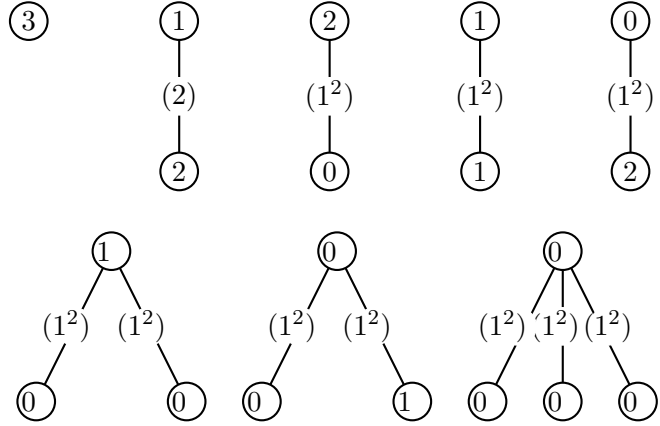
\begin{figure}[!ht]
	\centering
	\begin{tikzpicture}		
		
		\node (1) at (0,0) {};
		\filldraw[thick, fill = white] (0,0) circle (.25cm) node at (1) {$3$};

		\node (1) at (4,0) {};
		\node (2) at (4,-2) {}; 
		
		\draw[black, thick] (4,0)--(4,-2);
		
		\filldraw[thick, fill = white] (4,0) circle (.25cm) node at (1) {$2$};
		\filldraw[thick, fill=white] (4,-2) circle (.25cm) node at (2) {$0$};
		
		\filldraw[ fill=white,draw=white] (4,-1) circle (.25cm);
		\node at (4,-1) {$(1^2)$};

		\node (1) at (2,0) {};
		\node (2) at (2,-2) {}; 
		
		\draw[black, thick] (2,0)--(2,-2);
		
		\filldraw[thick, fill = white] (2,0) circle (.25cm) node at (1) {$1$};
		\filldraw[thick, fill=white] (2,-2) circle (.25cm) node at (2) {$2$};
		
		\filldraw[ fill=white,draw=white] (2,-1) circle (.25cm);
		\node at (2,-1) {$(2)$};

		\node (1) at (8,0) {};
		\node (2) at (8,-2) {}; 
		
		\draw[black, thick] (8,0)--(8,-2);
		
		\filldraw[thick, fill = white] (8,0) circle (.25cm) node at (1) {$0$};
		\filldraw[thick, fill=white] (8,-2) circle (.25cm) node at (2) {$2$};
		
		\filldraw[ fill=white,draw=white] (8,-1) circle (.25cm);
		\node at (8,-1) {$(1^2)$};

		\node (1) at (6,0) {};
		\node (2) at (6,-2) {}; 
		
		\draw[black, thick] (6,0)--(6,-2);
		
		\filldraw[thick, fill = white] (6,0) circle (.25cm) node at (1) {$1$};
		\filldraw[thick, fill=white] (6,-2) circle (.25cm) node at (2) {$1$};
		
		\filldraw[ fill=white,draw=white] (6,-1) circle (.25cm);
		\node at (6,-1) {$(1^2)$};
		
	\end{tikzpicture}
	
	\vspace{0.5cm} 
	\hspace{-0.3cm} 
	\begin{tikzpicture}	
		%\node (1) at (0,0) {};
		%\node (2) at (-1,-2) {}; 
		%\node (3) at (1,-2) {}; 
		
		%\draw[black, thick] (0,0)--(-1,-2);
		%\draw[black, thick] (0,0)--(1,-2);
		
		%\filldraw[thick, fill = white] (0,0) circle (.25cm) node at (1) {$0$};
		%\filldraw[thick, fill=white] (-1,-2) circle (.25cm) node at (2) {$2$};
		%\filldraw[thick, fill=white] (1,-2) circle (.25cm) node at (3) {$0$};
		
		%\filldraw[ fill=white,draw=white] (-0.5,-1) circle (.25cm);
		%\node at (-0.5,-1) {$(2)$};
		
		%\filldraw[ fill=white,draw=white] (0.5,-1) circle (.25cm);
		%\node at (0.5,-1) {$(1^2)$};
		
		%%%%
		
		\node (1) at (3,0) {};
		\node (2) at (2,-2) {}; 
		\node (3) at (4,-2) {}; 
		
		\draw[black, thick] (3,0)--(2,-2);
		\draw[black, thick] (3,0)--(4,-2);
		
		\filldraw[thick, fill = white] (3,0) circle (.25cm) node at (1) {$1$};
		\filldraw[thick, fill=white] (2,-2) circle (.25cm) node at (2) {$0$};
		\filldraw[thick, fill=white] (4,-2) circle (.25cm) node at (3) {$0$};
		
		\filldraw[ fill=white,draw=white] (2.5,-1) circle (.25cm);
		\node at (2.5,-1) {$(1^2)$};
		
		\filldraw[ fill=white,draw=white] (3.5,-1) circle (.25cm);
		\node at (3.5,-1) {$(1^2)$};
		
		%%%%
		
		\node (1) at (6,0) {};
		\node (2) at (5,-2) {}; 
		\node (3) at (7,-2) {}; 
		
		\draw[black, thick] (6,0)--(5,-2);
		\draw[black, thick] (6,0)--(7,-2);
		
		\filldraw[thick, fill = white] (6,0) circle (.25cm) node at (1) {$0$};
		\filldraw[thick, fill=white] (5,-2) circle (.25cm) node at (2) {$0$};
		\filldraw[thick, fill=white] (7,-2) circle (.25cm) node at (3) {$1$};
		
		\filldraw[ fill=white,draw=white] (5.5,-1) circle (.25cm);
		\node at (5.5,-1) {$(1^2)$};
		
		\filldraw[ fill=white,draw=white] (6.5,-1) circle (.25cm);
		\node at (6.5,-1) {$(1^2)$};
		%%%%
		
		\node (1) at (9,0) {};
		\node (2) at (9,-2) {}; 
		\node (3) at (8,-2) {}; 
		\node (4) at (10,-2) {}; 
		
		\draw[black, thick] (9,0)--(9,-2);
		\draw[black, thick] (9,0)--(8,-2);
		\draw[black, thick] (9,0)--(10,-2);
		
		\filldraw[thick, fill = white] (9,0) circle (.25cm) node at (1) {$0$};
		\filldraw[thick, fill=white] (9,-2) circle (.25cm) node at (2) {$0$};
		\filldraw[thick, fill=white] (8,-2) circle (.25cm) node at (3) {$0$};
		\filldraw[thick, fill=white] (10,-2) circle (.25cm) node at (4) {$0$};
		
		\filldraw[ fill=white,draw=white] (9,-1) circle (.25cm);
		\node at (9,-1) {$(1^2)$};
		
		\filldraw[ fill=white,draw=white] (8.4,-1) circle (.25cm);
		\node at (8.3,-1) {$(1^2)$};
		
		\filldraw[ fill=white,draw=white] (9.6,-1) circle (.25cm);
		\node at (9.7,-1) {$(1^2)$};
		
	\end{tikzpicture}

	\caption{Graphs for $d=2$, $(g,n)=(3,0)$.} \label{tree3} 
	
\end{figure}

For the partition $(2)$, $I$-functions are: 
\begin{align*}	
	&\hspace{-3cm} \mathsf{I}_{0,(2)}(z)=\mathbbm{1}\\
	&\hspace{-3cm} \mathsf{I}_{1, (2)}(z)= 2z^{-1}  \cdot \frac{z-\lambda_1}{\frac{z}{2}-\psi_1}= \mathrm{O}(z^{-1})\\
	& \hspace{-3cm} \mathsf{I}_{2, (2)}(z)= 2z^{-1}  \cdot \frac{z^2-\lambda_1 z+ \lambda_2}{\frac{z}{2}-\psi_1}= 4 \mathbbm{1}+ \mathrm{O}(z^{-1}).
\end{align*}
\noindent For the partition $(1^2)$, they are: 	
\begin{align*}		
		&\mathsf{I}_{0, (1^2)}(z)_{|\Mbar^\circ_{0,2}}=\frac{1}{2z}=\mathrm{O}(z^{-1})\\
		&\mathsf{I}_{0, (1^2)}(z)_{|\Mbar^\circ_{1,1}\times \Mbar^\circ_{0,1}}=   \frac{z-\lambda_1}{z-\psi_1}=\mathbbm{1}+\mathrm{O}(z^{-1})\\
		&\mathsf{I}_{1, (1^2)}(z)_{| \Mbar^\circ_{1,2} }= \frac{z-\lambda_1}{(z-\psi_1)(z-\psi_2)}=\mathrm{O}(z^{-1})\\
		&\mathsf{I}_{1, (1^2)}(z)_{| \Mbar^\circ_{1,1} \times \Mbar^\circ_{1,1} }=   \frac{z-\lambda_1}{z-\psi_1}\cdot \frac{z-\lambda_1}{z(z-\psi_1)}= \mathrm{O}(z^{-1})\\
		&\mathsf{I}_{1, (1^2)}(z)_{| \Mbar^\circ_{2,1} \times \Mbar^\circ_{0,1} }=   \frac{z^2-\lambda_1 z+ \lambda_2}{z-\psi_1 }=z -\lambda_1+ \psi_1 + \mathrm{O}(z^{-1})\\
		&\mathsf{I}_{2, (1^2)}(z)_{| \Mbar^\circ_{2,2} }=  \frac{ z^2-\lambda_1 z+ \lambda_2 }{(z-\psi_1)(z-\psi_2)}=\mathbbm{1}+\mathrm{O}(z^{-1}) \\
		&\mathsf{I}_{2, (1^2)}(z)_{| \Mbar^\circ_{2,1} \times   \Mbar^\circ_{1,1} }= \frac{z^2-\lambda_1 z+ \lambda_2}{z-\psi_1 }\cdot \frac{z-\lambda_1}{z(z-\psi_2)}=\mathbbm{1}+\mathrm{O}(z^{-1})\\
		&\mathsf{I}_{2, (1^2)}(z)_{| \Mbar^\circ_{3,1}\times \Mbar^\circ_{0,1} }= \frac{z^3-z^2\lambda_1+z\lambda_2-\lambda_3}{(z-\psi_1)}\\
		&\hspace{3.1cm}= z^2-z\lambda+\lambda_2+z\psi_1-\lambda_1\psi_1+\psi_1^2+\mathrm{O}(z^{-1}).
	\end{align*}

As explained in Appendix \ref{AppendixJohannes}, using  \texttt{admcycles} \cite{admcycles},  Corollary \ref{DoubleHyp} was verified  for $g\leq 3$.  For computations, it is important to include all vertices of graphs, even those corresponding to $\mathsf{I}_{0, (2)}(z)$, because of the insertion of $\psi$-classes on root components.\footnote{A $\psi$-class and the pullback of $\psi$-class by a forgetful map differ by a boundary divisor.}  The same applies to conjugate points - we can forget a conjugate point which is not glued to any other components only after inserting $\psi$-classes.

	\section{ELSV formula} \label{SectionELSV}
	
	\subsection{The polar part of $I$-function}  Theorem \ref{maintheorem} involves only the non-polar part of $\mathsf{I}_{g,n,\mu}(z)$ because negative powers $\psi$-classes are set to zero after the substitution of $z$.  The case of $\Mbar^\circ_{0,1}$  is special - there exists a different wall-crossing formula involving  the polar part of $ \mathsf{I}_{g,n,\eta}(z)$. Let 
	\[
	\overline{H}_{\mu, \underline{\eta}}(\p^1/\infty)^\sim
	\]	
	be the space of admissible covers to $(\p^1,\infty)$ with a ramification $\mu$ at $\infty \in \p^1$ up to automorphisms $\Aut(\p^1,\infty)$. In other words, this is a moduli space of admissible covers of the universal curve over $\Mbar^\circ_{0,1}$. Let 
	\[
	\widetilde{\mathsf{H}}_{\mu, \underline{\eta}} \in H^*(\Mbar_{g_0, \ell(\mu)+\sum \ell(\eta^i)})
	\] be the associated cycle. By $\tilde{\psi}_\infty$ we denote the $\psi$-class associated to $\infty \in \p^1$. Recall that  $\Fz(\mu)=\Aut(\mu)\prod^{\ell(\mu)}_{j=1}\mu_j$. 
	
	\begin{theorem} \label{ELSV} We have 
		\[[\mathsf{I}_{g,n,\mu}(z)]_{z^{<0}}= \Fz(\mu)  \cdot  \sum_{\Gamma} \frac{1}{\Aut(\Gamma)}\cdot \mathrm{gl}_{\Gamma*}\left(\frac{\widetilde{\mathsf{H}}_{\mu, \underline{\eta}}}{z-\tilde{\psi}_\infty} \boxtimes \prod^{k}_{i=1} \mathsf{I}_{g_i,n_i,\eta^i}(- \tilde{\psi_i})\right), \]
		such that the sum is taken over all graphs $\Gamma$ with $n$ leaves,  subject to the following condition:
		\begin{align*}
			g=\sum^{k}_{i=1}(g_i+\ell(\eta^{i})) + g_0-k, \quad 2g_0-2= -d-\ell(\mu)+\sum^{k}_{i=1} (d-\ell(\eta^i)).
		\end{align*}
		Negative powers of $\psi$-classes are set to zero.
	\end{theorem}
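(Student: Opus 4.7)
The plan is to repeat the master-space wall-crossing argument of Sections \ref{Sectionloc}--\ref{allwalls} in the case $(h,m)=(0,1)$, which was excluded from Theorem \ref{maintheorem}. The target $(\p^1,\infty)$ now carries a residual $\BC^*$-scaling action fixing $\infty$, and consequently none of the moduli spaces $\Mbar^\epsilon_{g,n}(\p^1/\infty,\mu)$ are proper; only their scaling-$\BC^*$-fixed loci are. This non-properness is precisely why $\mathsf{I}_{g,n,\mu}(z)$ is defined through residues on the proper fixed locus $V^{\BC^*}_{g,n,\mu}$ (Definition \ref{defnI}), and it forces the final identity to be interpreted as an equality of polar parts in $z$. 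The statement can thus be viewed as the explicit form of the $D_i$-contributions deferred in the remark following equation (\ref{onewall0}).

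First, I would construct the master space $M\Mbar^{\epsilon_0}_{g,n}(\p^1/\infty,\mu)$ of \cite[Section 6]{NuG} equivariantly with respect to the scaling $\BC^*$, which commutes with the wall-crossing $\BC^*$ of Section \ref{Sectionloc}. The decomposition (\ref{fixed_locus}) of the wall-crossing fixed locus survives and every stratum inherits the scaling action with proper scaling-fixed locus. Running the localisation for the wall-crossing $\BC^*$ as in Section \ref{Sectionloc} and telescoping across all walls from $\epsilon\ll 1$ to $\epsilon>1$ as in Section \ref{allwalls} produces an equivariant wall-crossing identity whose initial term involves $V_{g,n,\mu}$, whose final term involves $\overline{H}_{\mu,\underline{\eta}}(\p^1/\infty)^\sim$, and whose intermediate terms are the expected star-shaped graph corrections.

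Next, localise both sides with respect to the scaling $\BC^*$. The only proper fixed component on the $V_{g,n,\mu}$-side is $V^{\BC^*}_{g,n,\mu}$, whose contribution by Definition \ref{defnI} assembles into $\mathsf{I}_{g,n,\mu}(z)$ up to the normalisation $z\prod_j\mu_j$. On the admissible-cover side, the scaling $\BC^*$ acts trivially on $\overline{H}_{\mu,\underline{\eta}}(\p^1/\infty)^\sim$, and the single remaining normal direction is the smoothing of the node at $\infty$ on the target, whose equivariant Euler class is $z-\tilde{\psi}_\infty$ by the same identification of source and target $\psi$-classes used in Section \ref{Star}. The graph correction terms transform as in the proof of Theorem \ref{maintheorem}. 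The prefactor $\Fz(\mu)=\Aut(\mu)\prod_j \mu_j$ emerges from combining the $\prod_j\mu_j$ normalisation of the $I$-function with the $\Aut(\mu)$ appearing in the Hurwitz cycle (Section \ref{sectionCycle}). Taking the polar part in $z$ of the resulting identity yields the stated formula; the polynomial part has no geometric meaning since $V_{g,n,\mu}$ itself is not proper.

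The main obstacle will be the careful bookkeeping of equivariant weights at the $\epsilon>1$ chamber: verifying that the node-smoothing normal direction at $\infty$ has equivariant Euler class exactly $z-\tilde{\psi}_\infty$, and that the scaling-equivariant contributions of the $D_i$-classes from Section \ref{Sectionloc} collapse to match the asserted $\Fz(\mu)/(z-\tilde{\psi}_\infty)$ factor in the localisation limit.
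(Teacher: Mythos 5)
Your second half is sound and matches the paper: once one knows the identity
\[
[\mathsf{I}_{g,n,\mu}(z)]_{z^{<0}}=\prod_j\mu_j\cdot \pi_*\Big(\tfrac{[\Mbar^{\epsilon_+}_{g,n}(\p^1/\infty,\mu)^\sim]^{\mathrm{vir}}}{z-\tilde\psi_\infty}\Big),
\]
the theorem follows by applying (\ref{onewall}) across all remaining walls, and your accounting of the graph corrections and of the prefactor $\Fz(\mu)=\Aut(\mu)\prod_j\mu_j$ is correct. The gap is in how you propose to obtain this first identity. You want to telescope the wall-crossing of Sections \ref{Sectionloc}--\ref{allwalls} starting from $\epsilon\ll1$ with ``initial term $V_{g,n,\mu}$'' and then localise everything by the residual scaling torus. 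But for the rubber target $(\p^1,\infty)^\sim$ the $\epsilon$-unramified moduli space is \emph{empty} on one side of the extremal wall $\epsilon_0=2g-2+d+\ell(\mu)+n$ (this is exactly what the paper observes and exploits), so there is no chamber whose moduli space is, or degenerates to, $V_{g,n,\mu}$: that space lives over the \emph{parametrized} $\p^1$, excludes expanded degenerations, and is not proper, so it cannot be the initial term of any telescoping identity. Moreover the scaling $\BC^*$ acts trivially on every rubber chamber (it has been quotiented out), and localising a trivial action on a proper space produces no equivariant parameter; so the weight $z$ in $z-\tilde\psi_\infty$ cannot arise the way you describe. Your proposal also never explains how the wall-crossing parameter and the scaling parameter become the single variable $z$ of the statement. (Two smaller inaccuracies: the rubber spaces $\Mbar^{\epsilon}_{g,n}(\p^1/\infty,\mu)^\sim$ \emph{are} proper --- the paper pushes forward their virtual classes --- and the logical dependency with the $D_i$-remark is the reverse of what you assert: the paper needs Theorem \ref{ELSV} to justify (\ref{res}), not the other way around.)

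What is actually needed, and what the paper supplies, is a single ad hoc master space at the extremal wall: the locus
$M\Mbar^{\epsilon_0}_{g,n}(\p^1/\infty,\mu)^\sim\subset \p_{\mathfrak{M}_{g,n}(\p^1/\infty,\mu)^\sim}(L\oplus\CO)$
cut out by the two conditions ($v_1=0$ forces $\epsilon_+$-unramification, $v_2\neq 0$), with the $\BC^*$-action scaling $v_1$. This one action simultaneously plays the role of your two tori: its fixed locus is exactly $\Mbar^{\epsilon_+}_{g,n}(\p^1/\infty,\mu)^\sim\sqcup V^{\BC^*}_{g,n,\mu}$, the normal weight along the first component is $z-\tilde\psi_\infty$, and taking higher residues of the localisation formula on this \emph{proper} space yields (\ref{01case}) directly. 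Without this construction (or a substitute such as Graber--Vakil relative localisation on the parametrized $\Mbar_{g,n}(\p^1/\infty,\mu)$ followed by rubber calculus), the crucial first identity --- which is the actual new content of the theorem beyond Theorem \ref{maintheorem} --- is not established.
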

	
	\begin{proof} Let $\epsilon_+\in \BR_{>0}$ be such that 
		\begin{equation} \label{epsilonplus}
			2g-2+d+\ell(\mu)+n<\epsilon_+<  2g-2+d+\ell(\mu)+n+1.
		\end{equation}
		Note that on the left from the wall $\epsilon_0=2g-2+d+\ell(\mu)+n$, the conditions of $\epsilon$-unramification give an empty moduli space by Definition \ref{defnstability}. Moreover, the master space for the wall $\epsilon_0$ takes a different and simpler form. It is defined as follows. 
		
		Let $\mathfrak{M}_{g,n}(\p^1/\infty, \mu)^\sim$ the moduli stack of maps to $\p^1$ with the ramification profile $\mu$ over $\infty \in \p^1$ up to $\Aut(\p^1,\infty)$, such that the automorphisms of maps to the fixed $\p^1$ are finite (i.e., we require that the automorphisms of maps to $\p^1$ are finite, but they might be infinite if  automorphisms  of the target $\Aut(\p^1,\infty)$ are taken into account). Let $L$ be the line bundle given by the cotangent space at $\infty \in \p^1$.  Consider  the projective bundle, 
		\[
		\p_{\mathfrak{M}_{g,n}(\p^1/\infty, \mu)^\sim}(L\oplus \CO).
		\]
		The $B$-points of this space are tuples $(f,N, v_1, v_2)$, where $f \in \mathfrak{M}_{g,n}(\p^1/\infty, \mu)^\sim(B)$, $N$ is a line bundle on the base scheme $B$, and $v_1 \in H^0(B, L_B\otimes N)$ and $v_2\in H^0(B,N$) are sections with no common zeros. 
		We define
		\[ M\Mbar_{g,n}^{\epsilon_0}(\p^1/\infty, \mu)^\sim \subset \p_{\mathfrak{M}_{g,n}(\p^1/\infty, \mu)^\sim }(L\oplus \CO), \]
		by imposing two conditions: 
		\begin{enumerate}
			\item if $v_1=0$, then maps are $\epsilon^+$-unramified,
			\item  $v_2\neq 0$.
		\end{enumerate}
		It is easy to see that $M\Mbar_{g,n}^{\epsilon_0}(\p^1/\infty, \mu)^\sim$ is proper. In short, when $v_1$ tends to $0$ over the spectrum of a discrete valuation ring $\Delta$, and the underlying map is not $\epsilon_+$-unramified, we  reparametrize $(\p^1,\infty)$, i.e., multiply the section by $\pi^{-k}$ for some positive $k$, where $\pi$ is a uniformiser of $\Delta$.    When $v_2$ tends to $0$, we multiply the section by  $\pi^{k}$. 
		
		We have a $\BC^*$-action on the master space $M\Mbar_{g,n}^{\epsilon_0}(\p^1/\infty, \mu)^\sim$ given by scaling the line bundle $L$, 
		\[
		t\cdot (f,v_1,v_2):=(f,t\cdot v_1, v_2 ). 
		\]
	The fixed points are either $\epsilon^+$-unramified maps with $v_1= 0$ or maps from  $V_{g,n,\mu}^{\BC^*}$ with $v_1\neq 0$, 
		\[ (M\Mbar_{g,n}^{\epsilon_0}(\p^1/\infty, \mu)^\sim )^{\BC^*}=\Mbar_{g,n}^{\epsilon_+}(\p^1/\infty, \mu)^{\sim} \sqcup V_{g,n,\mu}^{\BC^*}. \]
		By taking higher residues in the localisation formula (i.e., coefficients of $\frac{1}{z^k}$), we obtain the relation, 
		\begin{equation} \label{01case}
		[\mathsf{I}_{g,n,\mu}(z)]_{z^{<0}}=\prod^{\ell(\mu)}_{j=1} \mu_j \cdot \pi_*\left( \frac{[\Mbar_{g,n}^{\epsilon_+}(\p^1/\infty, \mu)^{\sim}]^{\mathrm{vir}}}{z-\tilde{\psi}_{\infty}} \right).
		\end{equation}
		The factor  $\prod^{\ell(\mu)}_{j=1} \mu_j$ comes in because it appears in our definition of $I$-functions due to  the splitting of curves in the localisation formula of the master space from Section \ref{Sectionloc}; in the localisation formula above curves do not split. 
		
		To finish the proof,  we apply the formula  (\ref{onewall}) to the class on the right of (\ref{01case}), crossing all walls from $\epsilon_+$ of the from (\ref{epsilonplus}) to $\epsilon>1$.  
	\end{proof}
	
	\subsection{ELSV formula} For $n=0$, Theorem \ref{ELSV} specialises to the ELSV formula, if we take the coefficient of the minimal power of $z$. In this case,  by the dimension constraint, the only contributing $I$-functions are 
	\[ \mathsf{I}_{0,(2)}(z)=\mathbbm{1},\]
	i.e., those that correspond to simple ramifications. Indeed, the presence of other $I$-functions would drop the dimension of the root component, forcing the corresponding power of $\tilde{\psi}_\infty$ to vanish. From Theorem \ref{ELSV}, we therefore obtain
	\begin{equation} \label{ELSVeq}
		[z^{-m+1}]  \mathsf{I}_{g,\mu}(z) = \frac{\Fz(\mu)}{m! } \cdot  \tilde{\psi}_{\infty}^{m-2}\cdot \widetilde{\mathsf{H}}_{\mu, (2)^{m}} \in H^*(\Mbar_{g, \ell(\mu)} ), 
	\end{equation}
	where $m=2g-2+\ell(\mu)+d$. By the string equation, Lemma \ref{string},
	\[
	\int_{\Mbar_{g,\ell(\mu)+m}}\tilde{\psi}_{\infty}^{m-2}\cdot \widetilde{\mathsf{H}}_{\mu, (2)^{m}}= \mathrm{Hur}^{\p^1}(\mu, (2)^m),
	\]
	where $\mathrm{Hur}^{\p^1}(\mu, (2)^m)$ is a Hurwitz number, whose definition we recall in  Section \ref{sectionHu}. 
	The relation (\ref{ELSVeq}) also holds on all connected components of $\Mbar_{g,\ell(\mu)}$. Using Proposition \ref{localisation_formula}, we therefore obtain 
	\[ 
	\frac{m!}{\Aut(\mu)} \cdot  \prod^{\ell(\mu)}_{j=1}\frac{\mu_j^{\mu_j}}{\mu_j!} \cdot \frac{\Lambda^\vee(1)}{\prod^{\ell(\mu)}_{j=1} (1-\mu_j\psi_j)} =\mathrm{Hur}^{\p^1}(\mu, (2)^m)^\circ, 
	\]
	which is the ELSV formula, proved in \cite{ELSV,GVH}. Theorem \ref{ELSV} can be viewed as its cycle-valued refinement.
	
%	\subsection{Formula for $\lambda_g$} On a different note, if consider the degree one case of Theorem \ref{ELSV} but the highest power of $z$, we obtain a formula for $\lambda_g$. 
%	\begin{corollary} We have 
	%	\[ \lambda_g=\sum_{(g_1,\dots, g_k)}\frac{1}{\Aut(g_1,\dots,g_k)} \cdot \mathrm{gl}_{*} \cdot \left( [\Mbar^\circ_{0,k}] \boxtimes \prod^{k}_{i=1} \mathsf{I}_{g_i, (1)}(- \tilde{\psi_i})\right), \]
	%	such that $g_i>0$,  $k\geq 2$, $\sum g_i=g$  and 
	
	%	\[ 
%		\mathsf{I}_{g_i, (1)} =\frac{\Lambda^\vee(z)}{z-\psi_1} \in H^*(\Mbar_{g_i, 1}). 
%		\] 
	%	\end{corollary}
	
	\section{Numerical wall-crossing formula}
	\subsection{Fixed curve} In this section, we will focus on the numerical wall-crossing formula applied to a fixed smooth curve $X$ without relative points. More explicitly, this corresponds to inserting $\alpha=[X]$, where $X$ is a smooth genus $h$ curve without markings, and intersecting  cycles with sufficiently many  $\psi$-classes to cut them down to numbers.  
	
	\subsection{Numerical invariants}
	
	Given classes $\gamma_i  \in H^*(X)$, we define Gromov--Witten and Hurwitz invariants:
	\begin{align*}
		\langle \tau_{k_1}(\gamma_1) \dots \tau_{k_n}(\gamma_n)  \rangle^{\mathsf{GW}}_{g}&:=\int_{\mathsf{GW}_g(\underline{\gamma}) }\prod^{n}_{i=1} \psi_i^{k_i}, \\
		\langle \tau_{k_1}(\gamma_1) \dots \tau_{k_n}(\gamma_n)  \rangle^{\mathsf{H}}_{\underline{\eta}}&:=\int_{\mathsf{H}_{\underline{\eta}}(\underline{\gamma}) }\prod^{n}_{i=1} \tilde{\psi}_i^{k_i}.
	\end{align*}	
	We similarly define the numerical $I$-functions, 
	\[
	I_{g,\eta}(z; \tau_{k_1}(\gamma_1) \dots \tau_{k_n}(\gamma_n) ):= \prod^{n}_{i=1} \gamma_i \cdot \int_{ \mathsf{I}_{g,n,\eta}(z)} \prod^{n}_{i=1} \psi^{k_i}_i,	
	\]		
	the same notation applies to the connected $I$-functions 	$I^\circ_{g,\eta}(z; \tau_{k_1}(\gamma_1) \dots \tau_{k_n}(\gamma_n) )$. Note that numerical $I$-functions are elements of the cohomology $H^*(X)[z^\pm]$. 
	
	 By the homogeneity of classes under the integral sign, the  numerical $I$-functions are of the form $A z^{k}$, where $A\ \in H^*(X)$. The $z$-degree of a numerical $I$-function can be computed by the virtual dimension of $V_{g,n,\eta}$, 
	\begin{align*}
		\deg_z I_{g,\eta}(z; \tau_{k_1}(\gamma_1) \dots \tau_{k_n}(\gamma_n) )&=\sum_i k_i+1-\mathrm{vdim}(V_{g,n,\eta}) \\
		&=\sum_i k_i+3-2g-d-\ell(\eta)-n.
	\end{align*}
	We  define $I$-functions without the variable $z$ by taking the coefficient at $z^{\deg_z}$, 
	\[  
	I_{g,\eta} ( \tau_{k_1} \dots \tau_{k_n} ):=[z^{\deg_z}]\int_{ \mathsf{I}_{g,n,\eta}(z)} \prod^{n}_{i=1} \psi^{k_i}_i,
	\]
	where $ \tau_{k}$ is a shorthand for  $\tau_{k}(\mathbbm{1})$.

	\begin{remark} Note that if $g(X)\neq0$, then  forgetful maps  to  $\Mbar_{g,n}$ do not require stabilisation of curves. Hence the pullbacks of $\psi$-classes from $\Mbar_{g,n}$ agree with $\psi$-classes on moduli spaces of maps. If $g(X)=0$, then all results of this section also apply to integrals of $\psi$-classes over moduli spaces of maps. 
	\end{remark}
	
	\subsection{Hurwitz numbers} \label{sectionHu}
	For the point class $\omega \in H^2(X)$, the Hurwitz invariant
	\[\langle \tau_0(\omega) \dots \tau_0(\omega) \rangle^{\mathsf{H}}_{\underline{\eta}}\] gives a Hurwitz number of $X$, i.e., counts of ramified covers with the ramification profile $\underline{\eta}$, 
	\[ \mathrm{Hur}(\eta^1, \dots, \eta^n):= \langle \tau_0(\omega) \dots \tau_0(\omega) \rangle^{\mathsf{H}}_{\underline{\eta}}.  \]
	For Section \ref{sectionGWH}, it will be convenient to extend  $ \mathrm{Hur}(\eta^1, \dots, \eta^n)$ multilinearly to the vector space spanned by partitions of $d$, 
	\[ 
	\mathrm{Hur}(-, \dots, -) \colon \CZ(d)^n \rightarrow \BQ, \quad \CZ(d):=\big\{ \sum_{\eta \vdash d} a_\eta (\eta)  \mid a_\eta \in \BQ \big\}.
	\]
	
	\subsection{Fulton--MacPherson invariants}
	A moduli space of admissible covers admits a finite map to a Fulton--MacPherson space \cite{FM} of $X$ by associating branching points on the target to an admissible cover, 
	\begin{equation} \label{HFM}
		\overline{H}_{\underline{\eta}}(X) \rightarrow {FM}_n(X),
	\end{equation}
	the degree of this map is equal to the Hurwitz number $ \mathrm{Hur}(\underline{\eta})$. 
	
	Consider evaluation map, which is just the forgetful map associated to contraction of bubbles, 
	\[
	\ev \colon FM_n(X) \rightarrow X^n. 
	\]
	We define invariants associated to $FM_n(X)$, 
	\[
	\langle \tau_{k_1}(\gamma_1) \dots \tau_{k_n}(\gamma_n)  \rangle^{\mathsf{FM}}:= \int_{[FM_n(X)]} \ev^*(\gamma_1 \boxtimes \dots \boxtimes \gamma_n) \cdot \prod^n_{i=1} \psi^{k_i}_i. 
	\]
	Using the map (\ref{HFM}), we obtain that
	\begin{equation} \label{invHFM}
		\langle \tau_{k_1}(\gamma_1) \dots \tau_{k_n}(\gamma_n)  \rangle^{\mathsf{H}}_{\underline{\eta}}= \mathrm{Hur}(\underline{\eta}) \cdot \langle \tau_{k_1}(\gamma_1) \dots \tau_{k_n}(\gamma_n)  \rangle^{\mathsf{FM}}.
	\end{equation}
This allows to determine integrals on moduli spaces of admissible covers. In particular, we have the string equation, which will be used in Lemma \ref{raw}. 
	\begin{lemma} \label{string}
		The Fulton--MacPherson invariants satisfy the string equation, 
		\[
		\langle \tau_{k_1}(\gamma_1) \dots \tau_{k_n}(\gamma_n) \cdot \mathbbm{1} \rangle^{\mathsf{FM}}= \sum^n_{i=1} \langle \tau_{k_1}(\gamma_1) \dots \tau_{k_i-1}(\gamma_i) \dots \tau_{k_n}(\gamma_n)  \rangle^{\mathsf{FM}}.  
		\]
	\end{lemma}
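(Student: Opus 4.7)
The plan is to carry out the standard string-equation argument in the Fulton--MacPherson setting, using the forgetful morphism $\pi\colon FM_{m+1}(X)\to FM_m(X)$ that drops the last marked point. The evaluation maps are compatible in the sense that $\ev^{(m)}\circ\pi$ coincides with $\ev^{(m+1)}\colon FM_{m+1}(X)\to X^{m+1}$ composed with the projection to the first $m$ factors.

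The first step is to apply the projection formula. The insertion $\mathbbm{1}$ at the last marking together with the trivial $\psi$-power there give
\[
\langle \tau_{k_1}(\gamma_1)\cdots\tau_{k_m}(\gamma_m)\cdot\mathbbm{1}\rangle^{\mathsf{FM}}
=\int_{FM_m(X)} \ev^*(\gamma_1\boxtimes\cdots\boxtimes\gamma_m)\cdot\pi_*\!\left(\prod_{i=1}^{m}\psi_i^{k_i}\right).
\]
Hence everything reduces to the $\psi$-class pushforward identity
\[
\pi_*\!\left(\prod_{i=1}^{m}\psi_i^{k_i}\right) \;=\; \sum_{i=1}^{m}\psi_i^{k_i-1}\prod_{j\neq i}\psi_j^{k_j},
\]
with the convention that $\psi^{-1}=0$.

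The main step is to establish this identity via the comparison
\[
\psi_i^{(m+1)}=\pi^*\psi_i^{(m)}+D_{i,m+1},
\]
where $D_{i,m+1}\subset FM_{m+1}(X)$ is the divisor on which the $i$-th and $(m{+}1)$-st points are glued onto a common bubble. This is formally identical to the cotangent-class comparison for $\Mbar_{g,n+1}\to\Mbar_{g,n}$, and can be checked locally in charts of the iterated blow-up construction of the Fulton--MacPherson tower. The consequences needed are the same: $D_{i,m+1}\cdot\pi^*\psi_i^{(m)}=0$ (the cotangent line at marking $i$ is canonically trivialised along the bubble); the divisors $D_{i,m+1}$ for distinct $i$ are pairwise disjoint (two markings cannot collide with the $(m{+}1)$-st while remaining distinct from one another); and $D_{i,m+1}$ is a section of $\pi$. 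Binomial expansion of $(\pi^*\psi_i^{(m)}+D_{i,m+1})^{k_i}$ collapses to $(\pi^*\psi_i^{(m)})^{k_i}+D_{i,m+1}^{k_i}$, and pushing forward using $\pi_*\pi^*=0$ on dimension grounds together with the self-intersection formula $\pi_*(D_{i,m+1}^{k_i})=(\psi_i^{(m)})^{k_i-1}$ for $k_i\geq 1$ yields the claim.

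The main obstacle is a clean verification of the comparison formula and the auxiliary intersection identities directly on $FM_{m+1}(X)$. A backup route is to import both from the known theory on $\Mbar_{g,n}$: via the \'etale map $\overline{H}_{\underline{\eta}}(X)\to FM_k(X)$ from \eqref{HFM} and the relation \eqref{invHFM}, the identity transfers from the standard string equation on moduli of stable curves, bypassing any FM-specific local geometry.
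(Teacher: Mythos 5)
The paper offers no proof of this lemma at all --- it is stated as a standard fact and only used through (\ref{invHFM}) --- so there is no argument of the authors to compare against; your proposal supplies the expected standard proof and its overall strategy is correct. The reduction by the projection formula, the comparison $\psi_i^{(m+1)}=\pi^*\psi_i^{(m)}+D_{i,m+1}$ on the Fulton--MacPherson tower, the pairwise disjointness of the $D_{i,m+1}$, and the fact that each $D_{i,m+1}$ is a section of $\pi$ do combine to give exactly the pushforward identity you state. Two intermediate claims are written incorrectly, however. First, $(\pi^*\psi_i^{(m)}+D_{i,m+1})^{k_i}$ does not collapse to $(\pi^*\psi_i^{(m)})^{k_i}+D_{i,m+1}^{k_i}$. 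Second, since $D_{i,m+1}\big|_{D_{i,m+1}}=-\psi_i^{(m)}$ under the section identification, one has $\pi_*\bigl(D_{i,m+1}^{k_i}\bigr)=(-1)^{k_i-1}\bigl(\psi_i^{(m)}\bigr)^{k_i-1}$, not $\bigl(\psi_i^{(m)}\bigr)^{k_i-1}$; taken literally your two steps would introduce a sign $(-1)^{k_i-1}$ into each summand. The clean fix is to use the vanishing $\psi_i^{(m+1)}\cdot D_{i,m+1}=0$ directly: iterating $\psi_i^{k_i}=\psi_i^{k_i-1}\cdot\pi^*\psi_i^{(m)}$ gives
\[
\psi_i^{k_i}=\bigl(\pi^*\psi_i^{(m)}\bigr)^{k_i}+\bigl(\pi^*\psi_i^{(m)}\bigr)^{k_i-1}\cdot D_{i,m+1},
\]
which pushes forward sign-free and, combined with disjointness of the $D_{i,m+1}$ and $\pi_*\pi^*=0$, finishes the argument. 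Finally, be cautious with your backup route: the paper uses the map (\ref{HFM}) to deduce integrals on $\overline{H}_{\underline{\eta}}(X)$ from those on $FM_k(X)$, and the $\psi$-classes in the $\mathsf{FM}$-invariants are target classes, so ``importing'' the string equation from $\Mbar_{g,n}$ through the Hurwitz space is circular as stated; the direct argument on $FM_{m+1}(X)\to FM_m(X)$ is the right one.
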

\begin{proof} The result follows by applying forgetful maps between Fulton--MacPherson spaces, and using the relation between a $\psi$-class and the pullback of a $\psi$-class. 
\end{proof}	
	%\begin{proof} This is a straightforward application of the forgetful map and the expression of the pullback of a $\psi$-class. 
%	\end{proof}
	
	There exists another wall-crossing relating integrals on Fulton--MacPherson spaces to integrals on symmetric products, see \cite[Section 6]{NHilb}. 
	
	\subsection{Hodge integrals}
	Consider the numerical $I$-function $I^\circ_{g,\eta}(z, \prod \tau_{k_i})$. If we replace every occuring cohomology class $\gamma$ by $z^{\deg_{\BC}(\gamma)}\gamma$ and multiply the whole integral by $z^{-\dim(\overline{M})}$, the integral is left unchanged. This results in
	\begin{align*}
		I^\circ_{g,\eta}\Big(z;\prod^{n}_{i=1}\tau_{k_i} \Big)&=z^{\sum_i k_i-\mathrm{vd}} \cdot \prod_j\frac{\eta_j^{\eta_j}}{\eta_j!} \cdot \int_{\overline{M}_{g,n+l(\eta)}} \frac{\Lambda^\vee(1)}{\prod_j (\frac{1}{\eta_j}-\psi_j)} \prod^n_{i=1} \psi_i^{k_i},
	\end{align*}
	where $\mathrm{vd} = 2g-2 + n + d + \ell(\eta)$. 
	Let
	\begin{align*}
		\mathrm{H}^\circ(w_1,\ldots,w_n;u) &:= \sum_{g\geq 0} u^{2g-2}\int_{\overline{M}_{g,n}}\frac{\Lambda^\vee(1)}{\prod_{i=1}^n (\frac{1}{w_i}-\psi_i)} \\
		\mathrm{H}(w_1,\ldots,w_n;u) &:= \sum_{\coprod_{i}S_i = \{1,\ldots,n\}} \prod_i \mathrm{H}^\circ(w_{S_i}; u). 
	\end{align*}
	Using the Hodge integrals above, we write numerical $I$-functions as follows,
	\begin{multline} \label{expressionI}
		I_{g,\eta}\Big(z; \prod^{n}_{i=1}\tau_{k_i} \Big) \\
		=z^{\sum_i k_i -\mathrm{vd} +1}  \cdot \prod_j\frac{\eta_j^{\eta_j}}{\eta_j!} [u^{2g-2}w_1^{k_1+1}\ldots w_n^{k_n+1}] \mathrm{H}(\eta_1,\ldots,\eta_l,w_1,\ldots,w_n;u),
	\end{multline}
	where the notation above indicates that we take the coefficient of a series at the term $u^{2g-2}w_1^{k_1+1}\ldots w_n^{k_n+1}$.  This will be useful for the operator expressions in Section \ref{operatorexpression}.

	\subsection{Numerical wall-crossing formula}
	
	A special role is played by numerical $I$-functions without markings. In fact, there exist only two  non-trivial numerical $I$-function without marked points.

	\begin{lemma} \label{empty} 
		Assume
		\begin{equation*}
			\deg_z I_{g,\eta}(z; \emptyset) \geq 0, 
		\end{equation*}
		then $g=2-d$ and $\eta=(2,1^{d-2})$, or $g=1-d$ and  $\eta=(1^d)$. In these cases, numerical $I$-functions are of the following form:
		\[
		I_{2-d,(2,1^{d-2})}(z; \emptyset)=1,\quad I_{1-d,(1^{d})}(z; \emptyset)=z. 
		\]
	\end{lemma}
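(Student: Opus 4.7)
The plan is to combine the explicit $\deg_z$ formula with a lower bound on $g$ coming from nonemptiness of the moduli space. With $n=0$ and no descendants, the hypothesis $\deg_z I_{g,\eta}(z;\emptyset)\geq 0$ reads
\[ 2g + d + \ell(\eta) \leq 3. \]
On the other hand, the integral is over $\Mbar_{g,\ell(\eta)}$, which by the disconnected decomposition (\ref{disconnected}) is nonempty only when $g\geq 1-\ell(\eta)$: each connected piece must carry at least one marking, giving at most $\ell(\eta)$ pieces, each of nonnegative genus. Combining the two inequalities yields $d\leq \ell(\eta)+1$, and since $\ell(\eta)\leq d$ for every partition of $d$, this forces $\ell(\eta)\in\{d-1,d\}$.

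In the case $\ell(\eta)=d$ the partition must be $\eta=(1^d)$ and the two inequalities pinch to $g=1-d$. In the case $\ell(\eta)=d-1$ (requiring $d\geq 2$) the partition must be $\eta=(2,1^{d-2})$ and again $g=2-d$. These are precisely the two cases appearing in the statement, and in both of them $\Mbar_{g,\ell(\eta)}$ is zero-dimensional, consisting of the single stratum in which every connected component is a point $\Mbar^\circ_{0,1}$.

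To compute the two values, I would apply decomposition (\ref{decomposition}) to write $\mathsf{I}_{g,0,\eta}(z)=z\cdot\sum\prod\mathsf{I}^\circ_{g_i,0,\eta^i}(z)$. In both cases the extremality of $g$ forces the only surviving decomposition to be the maximally split one, in which every piece has $g_i=0$ and $\eta^i$ consists of a single part. Substituting the unstable formulas $\mathsf{I}^\circ_{0,0,(1)}(z)=1$ and $\mathsf{I}^\circ_{0,0,(2)}(z)=z^{-1}$ from Proposition \ref{localisation_formula} then collapses the product to $z\cdot 1^d = z$ and $z\cdot z^{-1}\cdot 1^{d-2}=1$ respectively. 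Integrating these zero-dimensional classes over the above zero-dimensional moduli spaces yields the stated values.

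The main subtlety is the lower bound $g\geq 1-\ell(\eta)$: one must exploit that the absence of source markings ($n=0$) caps the number of connected components of the source curve at $\ell(\eta)$, and in particular rules out contributions from $\Mbar^\circ_{0,0}$. Once this bound is in place, everything else reduces to a short case check and a direct substitution into the closed-form $I$-functions of Proposition \ref{localisation_formula}.
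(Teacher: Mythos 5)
Your proof is correct and follows essentially the same strategy as the paper: you use the degree formula $\deg_z I_{g,\eta}(z;\emptyset)=3-2g-d-\ell(\eta)$ together with the lower bound $g\geq 1-\ell(\eta)$ to pin down $(g,\ell(\eta))$, then compute the two values. The paper derives the lower bound by noting that elements of $V_{g,0,\eta}$ are built from $\BP^1$-tubes plus contracted components, whereas you phrase it via nonemptiness of the disconnected $\Mbar_{g,\ell(\eta)}$ in decomposition (\ref{disconnected}); these are the same fact dressed slightly differently. For the final values the paper just remarks that they ``follow from a simple localisation calculation,'' while you spell it out via decomposition (\ref{decomposition}), the forced maximal splitting (which you correctly justify by extremality of $g$), and the unstable formulas $\mathsf{I}^\circ_{0,0,(1)}(z)=1$, $\mathsf{I}^\circ_{0,0,(2)}(z)=z^{-1}$ from Proposition \ref{localisation_formula} — a welcome extra level of detail, but not a genuinely different route.
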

	
	\begin{proof} Firstly, 
		\[
		\deg_z I_{g,\eta}(z; \emptyset) =  3-2g-d-l(\eta).
		\]
		Since the maps in $V_{g,0,\eta}$ consist of $\BP^1$ tubes possibly with contracted components, we must have 
		\[
		g\geq 1-\ell(\eta).
		\]
		Hence in order for $\deg_z (I_{g,\eta}(z;\emptyset))$ to be positive, we must have
		\[
		(g, \ell(\eta))=(2-d, d-1) \quad \text{or} \quad (1-d, d).
		\]
		In particular, $\eta = (2,1^{d-2}))$ or $\eta=(1^d)$. The last statement follows from a simple localisation calculation. 
	\end{proof}
	
	%\begin{definition} \label{stable}
	%	\textcolor{red}{Substitute everything with graphs} We define  a \textit{partition} of $(g,\{1,\dots, n\},d)$ of length $k$, 
	%	\[\vv{g}:=((g_1,N_1, \eta^{1}),\dots,(g_k,N_k,\eta^{k})),\] 
	%	such that $g_i \in \BZ$,  $\emptyset \neq N_i \subseteq \{1,\dots,  n\}$, and $\eta^{i}$ are  partitions of $d$, subject to the following conditions:
	%	\begin{itemize}
		%		\item[1)] $N_i \cap N_j=\emptyset, \quad \cup_i N_i =\{1,\dots,  n\},$
		%	\item[2)] $2g_i-2+d+\ell(\eta^{i})+n_i>0$.
		%	\end{itemize}
	%	\end{definition} 

\begin{corollary} \label{corollary1}
	We have
	\begin{multline*}
		\langle \tau_{k_1}(\gamma_1) \dots \tau_{k_n}(\gamma_n)  \rangle^{\mathsf{GW}}_{g} \\
		=\sum_{b, \Gamma} \Big\langle  I_{g_1,\eta^{1}}\Big( -\tilde{\psi}_1; \prod_{j\in N_1} \tau_{k_j}(\gamma_j) \Big) \dots I_{g_k,\eta^{k}}\Big( -\tilde{\psi}_k; \prod_{j\in N_k} \tau_{k_j}(\gamma_j) \Big), \mathbbm{1}^b \Big\rangle^{\mathsf{H}}_{\underline{\eta},(2)^b} \frac{1}{\Aut(\Gamma)\cdot b!},
	\end{multline*}
	such that the sum is taken over $b\in \BZ_{>0}$  and  graphs $\Gamma$ with $N_i\neq \emptyset$ for all $i\geq1$, subject to the following condition: 
	\begin{align*}
		g&=\sum^{k}_{i=1}(g_i+\ell(\eta^{i}))+ g_0-k, \quad &2g_0-2= d(2g(X)-2)+b+\sum^{k}_{i=1} (d-\ell(\eta^i)).
	\end{align*}

	%\begin{itemize}
	%\item $\eta^i$ are partitions of $d$ 
	%	\item $(N_1, \dots N_k)$ is an ordered partition of $\{1,\dots,n\}$, such that $N_i\neq \emptyset$, 
	%\item $g=\sum^{k}_{i=1}(g_i+\ell(\eta^{i}))+ g_0-k$, $  2g(X)-2=d(2g_0-2)+\sum^{k}_{i=1} (d-\ell(\eta^i))+b.$
	%	\end{itemize}	
\end{corollary}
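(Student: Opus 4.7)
The plan is to specialise Theorem \ref{maintheorem} to the case of a fixed smooth target curve $X$ of genus $g(X)$ with no relative markings, i.e.\ take $h = g(X)$, $m = 0$, $\underline{\mu} = \emptyset$, and set $\alpha = [X]$ (equivalently, restrict via fibre product to a point of $\Mbar^\circ_{g(X), 0}$). Then I cap both sides with $\prod_i \psi_i^{k_i}$ and integrate over $\Mbar_{g,n}$. The left-hand side directly yields the GW invariant on the left of the corollary. On the right, the projection formula for each gluing map $\mathrm{gl}_\Gamma$, together with the fact that pulling back $\psi_j$ to $M_\Gamma$ gives the $\psi$-class on the component of $M_\Gamma$ carrying marking $j$, lets each summand factor into an integral on the root (producing a Hurwitz-type invariant) and independent integrals on each non-root component (producing the numerical $I$-functions $I_{g_i, \eta^i}(-\tilde{\psi}_i; \prod_{j \in N_i} \tau_{k_j}(\gamma_j))$). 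Recall that the substitution $z \mapsto -\tilde{\psi}_i$ lives on the root, so the root integral reads as a Hurwitz invariant with the $I$-function values inserted at the corresponding branch points.

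Next I reorganise the graph sum by separating non-root vertices into \emph{essential} ones (with $N_i \neq \emptyset$), which produce the $I$-function insertions that appear explicitly in the corollary, and \emph{auxiliary} ones (with $N_i = \emptyset$), which contribute the empty numerical $I$-function $I_{g_i, \eta^i}(-\tilde{\psi}_i; \emptyset)$. By Lemma \ref{empty}, such an empty contribution vanishes unless $\eta^i = (2, 1^{d-2})$ (value $1$) or $\eta^i = (1^d)$ (value $-\tilde{\psi}_i$, which requires $g_i = 1 - d \geq 0$, hence $d \leq 1$). The first case is a pure simple branching of the admissible cover: each such auxiliary vertex precisely replaces the Hurwitz class $\mathsf{H}_{\underline{\eta}}$ on the root by $\mathsf{H}_{\underline{\eta}, (2, 1^{d-2})}$ with a trivial $\mathbbm{1}$ insertion at the new branch point. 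Letting $b$ be the number of such vertices produces the $(2)^b$ and $\mathbbm{1}^b$ factors of the statement. The exceptional $(1^d)$ case, relevant only when $d \leq 1$, represents a trivial (non-branching) marked point that can be stripped by the string equation of Lemma \ref{string}.

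The last step is combinatorial. The automorphism factor $\Aut(\Gamma)$ appearing in Theorem \ref{maintheorem} splits as $\Aut(\Gamma_{\mathrm{ess}}) \cdot b!$, since the $b$ indistinguishable auxiliary vertices of type $(2, 1^{d-2})$ can be permuted freely; this yields the $\frac{1}{\Aut(\Gamma) \cdot b!}$ prefactor in the corollary, where the symbol $\Gamma$ there denotes my $\Gamma_{\mathrm{ess}}$. The Riemann--Hurwitz condition $2g_0 - 2 = d(2g(X) - 2) + \sum_i (d - \ell(\eta^i))$ of Theorem \ref{maintheorem} picks up an extra $+b$ because each simple-branching auxiliary vertex contributes $d - (d-1) = 1$ to the total branching; the condition on $g$ itself is preserved since adding a $(2, 1^{d-2})$ auxiliary vertex contributes $g_i + \ell(\eta^i) - 1 = (2-d) + (d-1) - 1 = 0$ to the $g$-equation.

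The main obstacle is careful bookkeeping: verifying that $\psi$-class pullbacks through $\mathrm{gl}_\Gamma$ and through the forgetful maps implicit in the numerical $I$-functions produce no hidden boundary corrections, correctly splitting $\Aut(\Gamma)$ when auxiliary vertices are collected into the $b!$, and cleanly disposing of the marginal $(1^d)$ case via the string equation. The hypothesis $(h, m) \neq (0, 1)$ of Theorem \ref{maintheorem} is satisfied here since $m = 0$, so no ad hoc modification is required. Once these technicalities are handled, the corollary is essentially a reorganisation of the cycle-valued wall-crossing formula.
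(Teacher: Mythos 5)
Your proposal is correct and follows essentially the same route as the paper: specialise Theorem \ref{maintheorem} to a fixed curve, cap with $\psi$-classes, apply the projection formula, and use Lemma \ref{empty} to reduce the empty non-root vertices to the simple-branching insertions counted by $b$. The only point where you diverge is the disposal of the $I_{1-d,(1^d)}(z;\emptyset)$ term: the paper excludes it because such a vertex violates the stability condition (\ref{stability}) (note $2g_i-2+d+\ell(\eta^i)+n_i=0$ there, and $g_i=1-d$ can be negative for disconnected curves, so your restriction to $d\leq 1$ is not the right criterion), whereas your string-equation workaround is unnecessary.
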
			

\begin{proof} This follows directly from Theorem \ref{maintheorem}, Lemma \ref{empty} and the projection formula. Insertion of $I_{2-d,(2,1^{d-2})}(z; \emptyset)$ corresponds to simple ramifications, hence the summation over integers $b$.    Since all source markings are on non-root vertices, the corresponding $\psi$-classes move to numerical $I$-functions.   Note that the numerical $I$-function  $I_{1-d,(1^{d})}(z; \emptyset)$ does not contribute, because it is unstable, i.e., it does not satisfy (\ref{stability}). 
\end{proof}

\begin{remark}Note that the condition $g=\sum^{k}_{i=1}(g_i+\ell(\eta^{i}))+ g_0-k$ is automatic, if all dimension constraints on the left and on the right of the equation above are satisfied. This follows from a simple dimension calculation. 
\end{remark}

\subsection{Wall-crossing for  point insertions}

Gromov--Witten invariants are the simplest when $\gamma_i=\omega$ for all $i$, where $\omega \in H^2(X)$ is the point class. This is because $\omega \cdot \omega=0$. 

\begin{corollary}\label{corollar2}
Let $\omega \in H^2(X)$ be the point class. We have 
\begin{equation*}
	\langle \tau_{k_1}(\omega)  \dots \tau_{k_n}(\omega) \rangle^{\mathsf{GW}}_{g}
	=\sum_{b,\Gamma} \Big\langle  I_{g_1,\eta^1}(-\tilde{\psi}_1; \tau_{1}(\omega)) \dots I_{g_n,\eta^n}(-\tilde{\psi}_n; \tau_{k}(\omega)), \mathbbm{1}^b \Big \rangle_{\underline{\eta},(2)^b}^{\mathsf{H}} \frac{1}{ b!},
\end{equation*}
such that the sum is taken over $b\in \BZ_{>0}$ and graphs $\Gamma$ with $N_i=\{i\}$ for all $i\geq 1$, 	subject to the same numerical condition as  in Corollary \ref{corollar2}. We order the non-root vertices of graphs according to the order of their leaves instead of genus labels. 
\end{corollary}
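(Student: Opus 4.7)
The plan is to specialise Corollary \ref{corollary1} to $\gamma_i = \omega$ and exploit the vanishing $\omega^2 = 0$ in $H^\ast(X)$.

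Concretely, starting from the formula of Corollary \ref{corollary1}, I would substitute $\gamma_i = \omega$ for each $i \in \{1,\dots,n\}$. By definition of the numerical $I$-function, each non-root vertex of a star-shaped graph $\Gamma$ with leaf set $N_i$ contributes the factor
\[
I_{g_i,\eta^{i}}\!\Big(-\tilde\psi_i;\prod_{j\in N_i}\tau_{k_j}(\omega)\Big),
\]
which carries the cohomological coefficient $\prod_{j \in N_i} \omega = \omega^{|N_i|}$ in $H^\ast(X)$. Since $\omega^2 = 0$ on the curve $X$, this factor vanishes as soon as $|N_i| \geq 2$. Combined with the constraint $N_i \neq \emptyset$ for $i \geq 1$ already imposed in Corollary \ref{corollary1}, only graphs with $|N_i| = 1$ for every non-root vertex contribute. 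In particular, the number of non-root vertices is exactly $n$, and the leaves are in bijection with the non-root vertices.

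Next, I would handle the automorphism factor. Since the $n$ non-root vertices carry pairwise distinct singleton leaf sets from $\{1,\dots,n\}$, the triples $(g_i, N_i, \eta^i)$ are automatically all distinct, hence $\Aut(\Gamma) = 1$ and the factor $1/\Aut(\Gamma)$ from Corollary \ref{corollary1} disappears. Choosing the canonical ordering of non-root vertices by the labels of their leaves (rather than by genus labels), as prescribed in the statement, identifies the graphs surviving in Corollary \ref{corollary1} with those satisfying $N_i = \{i\}$ for $i = 1, \dots, n$ in the claimed formula.

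The numerical conditions relating $g$, $g_0$, $d$, $b$ and the partitions $\eta^i$ are inherited verbatim from Corollary \ref{corollary1}, and no further bookkeeping is required. The only step that deserves careful verification is the collapse of the automorphism factor to $1$, which is immediate from the distinctness of the singleton leaf sets. Consequently, I do not foresee a serious obstacle: Corollary \ref{corollar2} follows as a straightforward specialisation of Corollary \ref{corollary1}.
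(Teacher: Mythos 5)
Your argument is correct and coincides with the paper's own proof: both use $\omega\cdot\omega=0$ to force the vanishing of every numerical $I$-function with more than one marking, so that only graphs with $|N_i|=1$ for each non-root vertex survive in Corollary \ref{corollary1}, and both observe that the distinct singleton leaf sets make $\Aut(\Gamma)$ trivial. No gaps.
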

%\[ \sum_i (k_i-2g_i+2 -d - l(\eta^i) )=b. \]
\begin{proof}  Since $\omega \cdot \omega=0$ in $H^*(X)$, we have
\[ I_{g,\eta}\Big(z; \prod^{n}_{i=1}\tau_{k_i}(\omega)\Big)=0 \quad \text{ if } n>1.\] 
Therefore every $I$-function that occurs must have at most one marking. Since each vertex of  a graph has a distinct marked leaf, the automorphisms of such graphs are trivial. 
%The last statement follows from the dimension constraint of the moduli space of admissible covers; the dimension of $H_{\underline{\eta},(2)^b}(X)$ is $n+b$, while the degree of the insertion $I_{g_i,\eta^i}(-\tilde{\psi}_1, \tau_{k_1}(\omega))$ is $k_i-2g_i+2 -d - l(\eta^i)+1$. 
\end{proof}

\subsection{Further simplification of the wall-crossing formula for point insertions}
We simplify  Corollary \ref{corollar2} further by making the terms on the right-hand side more explicit. 
\begin{lemma} \label{raw} We have 
\[
\langle \tau_{k_1}(\omega)\ldots \tau_{k_n}(\omega) \rangle_{g}^\GW= \mathrm{Hur}^X(\tau_{k_{1}}^d,\dots, \tau_{k_n}^d ) 
\]
where 
\[
\tau_k^d=\sum_{\substack{b, \eta, \mu}} \Fz(\mu) \cdot \mathrm{Hur}^{\BP^1}\left(\mu,\eta,\frac{(-(2))^{b}}{b!}\right) \cdot I_{(k+2-b-d-\ell(\eta))/2,\eta}(\tau_{k}) \cdot (\mu)\in  \CZ(d).
\]
%Observe that the sum is finite because $I_{(k+2-b-d-\ell(\eta))/2,\eta}(\tau_{k})$ vanishes for big enough $b$. 
\end{lemma}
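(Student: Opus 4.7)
The plan is to specialise Corollary \ref{corollar2} to point insertions, evaluate the resulting Hurwitz integral explicitly using the string equation together with $\omega^2=0$, and then identify the outcome with $\mathrm{Hur}^X(\tau_{k_1}^d,\ldots,\tau_{k_n}^d)$ via a standard Hurwitz convolution identity in the centre of $\BC[S_d]$.

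First, the homogeneity statement preceding the definition of $I_{g,\eta}(\tau_k)$ gives
\[
I_{g,\eta}(z;\tau_{k}(\omega))=\omega\cdot I_{g,\eta}(\tau_{k})\cdot z^{d(g,\eta,k)},\qquad d(g,\eta,k):=k+2-2g-d-\ell(\eta).
\]
Substituting $z=-\tilde\psi_i$ into Corollary \ref{corollar2} and pulling the scalars $\prod_i(-1)^{d_i}I_{g_i,\eta^i}(\tau_{k_i})$ out of the Hurwitz integral leaves the integral of $\omega^{\boxtimes n}\cdot\mathbbm{1}^b\cdot\prod_i\tilde\psi_i^{d_i}$. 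By (\ref{invHFM}) this equals $\mathrm{Hur}^X(\underline\eta,(2)^b)$ times the corresponding Fulton--MacPherson integral on $FM_{n+b}(X)$. Applying Lemma \ref{string} $b$ times to absorb the $\mathbbm{1}$'s produces a sum over $\underline c\in\BZ_{\geq 0}^n$ with $\sum c_i=b$ and multinomial weight $\binom{b}{c_1,\ldots,c_n}$. Because $FM_n(X)$ has dimension $n$ while $\omega^{\boxtimes n}$ already has codimension $n$, only $c_i=d_i$ contributes; this forces $b=\sum d_i$ and yields the value $1$. After $\binom{b}{d_1,\ldots,d_n}$ cancels against the $1/b!$ of Corollary \ref{corollar2}, one arrives at
\[
\langle\tau_{k_1}(\omega)\cdots\tau_{k_n}(\omega)\rangle^{\mathsf{GW}}_{g}=\sum_{\Gamma}\mathrm{Hur}^X\bigl(\underline\eta,(2)^{\sum d_i}\bigr)\prod_{i=1}^n\frac{(-1)^{d_i}I_{g_i,\eta^i}(\tau_{k_i})}{d_i!}.
\]

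Next, expand $\mathrm{Hur}^X(\tau_{k_1}^d,\ldots,\tau_{k_n}^d)$ via the multilinearity of $\mathrm{Hur}^X$ and the definition of $\tau_k^d$, using $\mathrm{Hur}^{\BP^1}(\mu,\eta,(-(2))^b/b!)=(-1)^b\mathrm{Hur}^{\BP^1}(\mu,\eta,(2)^b)/b!$. The genus constraint $g_i=(k_i+2-b_i-d-\ell(\eta^i))/2$ inside the $I$-subscript of $\tau_{k_i}^d$ identifies the summation index $b_i$ with $d_i$, so a term-by-term comparison with the previous display reduces the lemma to the identity
\[
\mathrm{Hur}^X\bigl(\underline\eta,(2)^{\sum d_i}\bigr)=\sum_{\underline\mu}\prod_{i=1}^n\Fz(\mu^i)\cdot\mathrm{Hur}^{\BP^1}(\mu^i,\eta^i,(2)^{d_i})\cdot\mathrm{Hur}^X(\mu^1,\ldots,\mu^n),
\]
which must hold for every choice of $(\underline\eta,\underline d)$. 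This is the classical convolution identity in $\CZ(d)=Z(\BC[S_d])$, reflecting that three-point $\BP^1$-Hurwitz numbers furnish the structure constants for conjugacy-class sums while $\mathrm{Hur}^X$ is the trace functional determined by the genus of $X$; it follows directly from Burnside's formula together with the character orthogonality $\sum_\nu\chi^\lambda(\nu)\chi^{\lambda'}(\nu)|C(\nu)|=d!\,\delta_{\lambda,\lambda'}$.

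The main obstacle is the clean emergence of this last identity without spurious combinatorial factors. A naive bubble-off degeneration of $X$ at each marked point would mix in multinomial prefactors from distributing the $\sum d_i$ simple ramifications between $X$ and the bubbles, together with residual simple ramifications remaining on $X$; the character-theoretic route bypasses these issues, as the $\Fz(\mu^i)$ weights match exactly the natural inner product on $\CZ(d)$. Everything else -- the homogeneity of $I_{g,\eta}(z;\tau_k(\omega))$, the FM reduction of the Hurwitz integral, and the string-equation bookkeeping -- is routine given $\omega^2=0$.
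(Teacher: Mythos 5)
Your proof is correct and follows essentially the same route as the paper: start from Corollary \ref{corollar2}, pull out the homogeneous $I$-function coefficients after substituting $z=-\tilde\psi_i$, reduce the Hurwitz side to a Fulton--MacPherson integral via (\ref{invHFM}) and the string equation, use the dimension constraint to force $c_i=d_i$ so that the multinomial factor cancels against $1/b!$, and finally identify $\mathrm{Hur}^X(\underline\eta,(2)^{\sum d_i})$ with the convolution $\sum_{\underline\mu}\prod_i\Fz(\mu^i)\mathrm{Hur}^{\BP^1}(\mu^i,\eta^i,(2)^{d_i})\mathrm{Hur}^X(\underline\mu)$. The only variation is in the justification of this last identity: the paper obtains it by bubbling off $n$ rational tails, each carrying the $\eta^i$ branch point together with $b_i=d_i$ simple branch points, and invoking the degeneration formula for Hurwitz numbers, while you prove the same identity directly from the Burnside formula and the orthogonality $\sum_\mu\Fz(\mu)f_\mu(\lambda)f_\mu(\lambda')=d!^2\,\delta_{\lambda,\lambda'}/(\dim\lambda)^2$; these are equivalent, since the latter is precisely how the degeneration formula is established. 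One small correction to your final remark: the concern that a ``naive bubble-off'' would introduce spurious multinomial prefactors is unfounded. In the degeneration one \emph{prescribes} which branch points land on which component (namely $\eta^i$ and $d_i$ simple points on the $i$-th bubble, none on the spine $X$), so there is no distribution to sum over and the degeneration formula yields the clean convolution directly.
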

\begin{proof}
By Corollary \ref{corollar2}, we have 
\begin{equation} \label{theformula}
	\langle \tau_{k_1}(\omega)\ldots \tau_{k_n}(\omega) \rangle_{g,d}^\GW =\sum_{b,\Gamma} \Big\langle    \prod^{n}_{i=1} I_{g_i,\eta^{i}}\Big(-\tilde{\psi}_i; \tau_{k_i}(\omega) \Big), \mathbbm{1}^b \Big\rangle^{\mathsf{H}}_{\underline{\eta}, (2)^b}\frac{1}{b!}.
\end{equation}
By the string equation from Lemma \ref{string}, this is equal to
\[
\Big\langle   \prod_{i=1}^n (-1)^{b_i}\tau_{b_i}(\omega) \cdot  \mathbbm{1}^b \Big\rangle^{\mathsf{H}}_{\underline{\eta},  (2)^b} = (-1)^b\frac{b!}{b_1! \dots b_n!} \mathrm{Hur}^X(\underline{\eta}, (2)^b),
\]
where $b=b_1+\ldots+b_n$. As a result, we get 
\[
(\ref{theformula})=\sum_{b, \Gamma} \mathrm{Hur}^X\left(\underline{\eta}, (2)^b\right) \cdot \prod_{i=1}^n \frac{(-1)^{b_i}}{b_i!} I_{g_i,\eta^i}(\tau_{k_{i}}),
\]
where $b_i$ is the $z$-degree of $I_{g_i,\eta^i}(z; \tau_{k_i}(\omega))$, that is 
\[ 
b_i=k_{i}-2g_i+2 -d - l(\eta^i). 
\]
We then degenerate $X$ to $X$ with $n$ copies of $\p^1$ attached to it, which contain $b_i$ simple branching points and the branching point corresponding to $\eta^i$. Using the degeneration formula for Hurwitz numbers,
\[
(\ref{theformula}) =\sum_{\substack{ b, \Gamma, \underline{\mu}}} \mathrm{Hur}^X(\underline{\mu}) \cdot \prod_{i=1}^n \Fz(\mu^i) \mathrm{Hur}^{\BP^1}\left(\mu^i,\eta^i,\frac{(-(2))^{b_i}}{b_i!}\right)\cdot I_{g_i,\eta^i}(\tau_{k_{i}}).
\]
Expressing $g_i$ in terms of $b_i$, $\eta^i$ and $k_{i}$, we obtain the statement of the lemma. 
\end{proof}

\begin{remark}Observe that the expression of $\tau^d_k$ from Lemma \ref{raw} is similar to the one  obtained in \cite[Section 1.4]{OPcom} involving the Gromov--Witten invariants of the relative $\p^1$. In fact, our  $\tau^d_k$ can be obtained from the relative $\p^1$ by applying four operations simultaneously: 
	\begin{enumerate}
	 \item localisation on the relative $\p^1$, 
	 \item rigidification of the rubber $\p^1$, 
	 \item exchange of relative $\psi$-classes with absolute $\psi$-classes, 
	 \item  Gromov--Witten/Hurwitz correspondence for simple ramifications. 
	 \end{enumerate}
	 In particular, the statement presented in the lemma is equivalent to the one from \cite[Section 1.4]{OPcom}. However, we will use our $\tau^d_k$ to derive the Gromov--Witten/Hurwitz correspondence, as it admits a much nicer operator expression. 
\end{remark}

\section{Gromov--Witten/Hurwitz correspondence} \label{sectionGWH}

\subsection{Operator expressions} \label{operatorexpression}

In this section, we will evaluate $\tau_k^d$. We firstly express Hodge integrals and  therefore the numerical $I$-functions, and double Hurwitz numbers in terms of operators acting on the Infinite wedge $\wedge^{\frac{\infty}{2}} V$.  The necessary background for working with $\wedge^{\frac{\infty}{2}} V$ is recalled in Appendix \ref{Appendix}. 

Firstly,  the Burnside formula for Hurwitz numbers of a curve reads
\[
\mathrm{Hur}^{X}(\eta^1,\ldots,\eta^n) = \sum_{\lambda\vdash d} \left(\frac{\dim\lambda}{d!}\right)^{2-2g(X)}\prod_{i=1}^n f_{\eta^i}(\lambda),
\]
such that
\begin{equation} \label{flambda}
f_\eta(\lambda) = \frac{d!}{\Fz(\eta)}\frac{\chi_{\eta}^\lambda}{\dim\lambda}
\end{equation}
 for a character of the symmetric group $\chi^\lambda_\eta$ of an element in the conjugacy class $\eta$ acting on the representation associated to $\lambda$, and $\dim \lambda $ is the dimension of the representation.  For positive integers $\eta_i$, recall the operator expression for Hodge integrals, 
\[
\mathrm{H}(\eta_1,\ldots,\eta_n ;u) = u^{-l(\eta)-\lvert\eta\rvert}\prod_j \frac{\eta_j!}{\eta_j^{\eta_j}} \left\langle  e^{\alpha_1}e^{u\CF_2}\prod \alpha_{-\eta_j}\right\rangle,
\]which follows from expressing Hodge integrals in terms of Hurwitz numbers via the ELSV formula \cite{ELSV}, and then the Hurwitz numbers in terms of characters of symmetric groups via the Burnside formula; the operator expressions for the latter was  derived in \cite{Ok}. 
For arbitrary complex numbers $w_i$, Okounkov--Pandharipande \cite{OPeq} provide an extension of the formula above, 
\[ 
\mathrm{H}(w_1,\ldots,w_n;u) =u^{-n}\left\langle \prod \CA(w_i,uw_i)\right\rangle. 
\]
% Recall also that there is the following operator formula:
%	\begin{align*}
%	H(\eta_1,\ldots,\eta_l,w_1,\ldots,w_n;u) &= u^{-l-n}\left\langle \prod_{i=1}^n \CA(w_i,uw_i)\prod_{i=1}^l \CA(\eta_i,u\eta_i)\right\rangle\\
%	& = u^{-l-n-\lvert\eta\rvert}\prod_{i=1}^l \frac{\eta_i!}{\eta_i^{\eta_i}} \left\langle \prod_{i=1}^n \CA(w_i,uw_i)e^{\alpha_1}e^{u\CF_2}\prod_{i=1}^l \alpha_{-\eta_i}\right\rangle
%	\end{align*}
Using (\ref{expressionI}), we therefore obtain
\begin{multline}
I_{g,\eta}\Big(z; \prod^{n}_{i=1}\tau_{k_i} \Big) \\
= z^{\sum_i k_i -\mathrm{vd} +1} [u^{\mathrm{vd}}w_1^{k_1+1}\ldots w_n^{k_n+1}] \left\langle \prod \CA(w_i,uw_i)e^{\alpha_1}e^{u\CF_2}\prod \alpha_{-\eta_j}\right\rangle.
\end{multline}
%such that for $(g,n)=(-d+1,0)$ this expression is set to zero. 

Let us now list the formulas relevant for the proof of Theorem \ref{ProofGWH}. Firstly, the $I$-functions in the definition of $\tau_k^d$ involve only one $\psi$-class, which  by the preceding analysis admits the following expression:  
\begin{align*}
I_{g, \eta}(\tau_k) = \Fz(\eta) [u^{\mathrm{vd}}w^{k+1}] u^{-\ell(\eta)-d-1} \langle \eta| e^{u\CF_2}e^{\alpha_{-1}}\CA^*(w,uw) \rangle, 
\end{align*}
where we took the adjoint operators and 
\[
| \eta \rangle = \frac{1}{\Fz(\eta)}\prod_j \alpha_{-\eta_j} v_\emptyset.
\]
Furthermore, by the Burnside formula and (\ref{FB}),  we have
\[
\mathrm{Hur}^{\BP^1}(\mu,\eta, e^{-u(2)}) = \sum_{\lambda\vdash d}  \langle  \mu | e^{-\CF_2u}v_\lambda  \rangle\cdot \langle v_\lambda| \eta\rangle. 
\]

%\begin{multline*}
%	\langle \tau_{k_1}(\gamma_1) \dots \tau_{k_n}(\gamma_n)  \rangle^{\mathsf{GW}}_{g,d,\underline{\eta}}\\
%	=[u^{\sum_i (k_i+\deg_\BC(\gamma_i)-1)}]\sum_{\substack{\coprod_{i=1}^l S_i =\Set{1,\dots,n}\\S_i\neq\emptyset\\\eta^1,\dots,\eta^l\vdash d}} u^{\sum_i (d+l(\eta^i))} \Big\langle   \prod^{k}_{i=1} I_{\eta^{i}}\Big( \prod_{j\in N_i} \tau_{k_j}(\gamma_j) \Big) \cdot e^{u(2)I_{-d+2,(2)}}\Big\rangle^{+}_{d,\underline{\eta},(2)^b}
%	\end{multline*}
%	where the sum goes over unordered partitions of $\Set{1,\dots,n}$.

%\[
%\sum_{\eta,b}\frac{(-1)^b}{b!}\mathrm{Hur}^{\BP^1}(\mu,\eta,(2)^b) I_{(k+2-b-d-\ell(\eta))/2,\eta}\Big(\tau_k(\omega)\Big) = \sum_{\eta}[u^{k-d-\ell(\eta)}]\mathrm{Hur}^{\BP^1}(\mu,\eta,e^{-u(2)}) I_{\eta}\Big(\tau_k(\omega)\Big)
%\]

\subsection{Gromov--Witten/Hurwitz correspondence}
We write 
\[ \tilde{\mu}\leq \mu\] 
if $\mu$ can be obtained from $\tilde{\mu}$ by adding parts $\{1\}$, that is, if 
\[ \mu=(1^i, \tilde{\mu}), \quad \text{for some } i\geq0.\]
 We denote by $m_1(\mu)$ the multiplicity of $\{1\}$ in a partition $\mu$. The next result recovers the completed cycles of \cite{OPcom}. 
\begin{theorem}\label{ProofGWH} Consider $\tau^d_k$ from Lemma \ref{raw}. It admits the following expression, 
\[
\tau_k^d = \sum_{\substack{\mu\vdash d\\ \tilde{\mu} \leq \mu}} \binom{m_1(\mu)}{m_1(\tilde{\mu})}\cdot \rho_{k,\tilde{\mu}}\cdot(\mu) \in \CZ(d),
\]
where 
\[
\rho_{k,\mu} = \frac{\prod_j \mu_j}{\lvert\mu\rvert !}[x^{k+2-\lvert\mu\rvert-l(\mu)}] S(x)^{\lvert\mu\rvert-1}\prod_j S(\mu_j x).
\]

\end{theorem}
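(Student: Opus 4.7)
The plan is to evaluate $\tau_k^d$ from Lemma \ref{raw} in the infinite-wedge operator formalism recalled in Section \ref{operatorexpression}, and to identify the resulting closed expression with the completed cycle expansion via the explicit form of the operator $\CA$ from \cite{OPeq}.

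First, I would substitute the operator expressions into the sum of Lemma \ref{raw}: the double Hurwitz number becomes the Burnside sum $\sum_{\lambda\vdash d}\langle\mu|e^{-u\CF_2}v_\lambda\rangle\langle v_\lambda|\eta\rangle$, while the numerical $I$-function becomes $\Fz(\eta)$ times a coefficient extraction from the matrix element $\langle\eta|e^{u\CF_2}e^{\alpha_{-1}}\CA^*(w,uw)|0\rangle$. The internal sum over $b$ in Lemma \ref{raw}, with its prefactor $\tfrac{(-1)^b}{b!}$ and the constraint $g=(k+2-b-d-\ell(\eta))/2$, is then absorbed via the identity
\[
\sum_b\frac{1}{b!}[u^{n-b}]X^b=[u^n]e^{uX}
\]
valid for any diagonal operator $X$, so that the insertions of simple ramifications reassemble into an extra $e^{-u\CF_2}$ on the Hurwitz side matching the $e^{u\CF_2}$ on the $I$-function side.

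Second, I would perform the sum over $\eta\vdash d$ using the completeness relation $\sum_{\eta\vdash d}\Fz(\eta)|\eta\rangle\langle\eta|=\mathrm{Id}$ on the weight-$d$ subspace of $\wedge^{\infty/2}V$. Since $\CF_2$ acts diagonally on the Schur basis $\{v_\lambda\}$, the two exponentials $e^{\pm u\CF_2}$ cancel when their matrix elements are contracted through $v_\lambda$. One is left with the single vacuum expectation
\[
\tau_k^d=\sum_{\mu\vdash d}\Fz(\mu)\,[w^{k+1}]\langle\mu|e^{\alpha_{-1}}\CA^*(w,w)|0\rangle\cdot(\mu).
\]

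Third, I would expand the factor $\langle\mu|e^{\alpha_{-1}}$. The identity $\alpha_{-1}^i|\tilde\mu\rangle=\tfrac{\Fz(\mu)}{\Fz(\tilde\mu)}|\mu\rangle$ for $\mu=(1^i,\tilde\mu)$, together with the normalisation $\langle\mu|\nu\rangle=\delta_{\mu\nu}/\Fz(\mu)$, produces after dualising exactly the binomial coefficient $\binom{m_1(\mu)}{m_1(\tilde\mu)}$ from the statement of the theorem. The problem is thereby reduced to evaluating the matrix elements $\langle\tilde\mu|\CA^*(w,w)|0\rangle$ for partitions $\tilde\mu$.

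The main obstacle will be the final identification: applying the explicit vertex-operator form of $\CA(w,uw)$ from \cite{OPeq} to compute $\langle\tilde\mu|\CA^*(w,w)|0\rangle$, and verifying that after extraction of $[w^{k+1}]$ the result precisely matches the claimed expression
\[
\rho_{k,\tilde\mu}=\frac{\prod_j\tilde\mu_j}{|\tilde\mu|!}\,[x^{k+2-|\tilde\mu|-\ell(\tilde\mu)}]\,S(x)^{|\tilde\mu|-1}\prod_jS(\tilde\mu_jx).
\]
This step relies crucially on the analytic continuation of $\CA$ in both arguments established in \cite{OPeq}, which is exactly the feature that makes the operator expression of $\tau_k^d$ ``much nicer'' than the relative Gromov--Witten expression used in \cite{OPcom}, and which produces the $S$-function product together with the normalisation $\frac{\prod_j\tilde\mu_j}{|\tilde\mu|!}$ in a clean form.
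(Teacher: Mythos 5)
Your overall strategy is the same as the paper's: pass to the infinite‐wedge operator formalism, use the Burnside formula for the double Hurwitz factor and the $\CA$-operator expression for the numerical $I$-function, cancel the two $\CF_2$-exponentials via diagonality on the $v_\lambda$ basis and the completeness relation, then deal with $e^{\alpha_{-1}}$ to produce the binomial coefficients. Your route through $e^{\alpha_{-1}}$ — dualising $\langle\mu|\alpha_{-1}^i = \langle\tilde\mu|$ and absorbing the $\Fz$-ratio — is a correct alternative to the paper's method of commuting $e^{\alpha_{-1}}$ past $\prod\alpha_{\mu_j}$, and the two give the same $\binom{m_1(\mu)}{m_1(\tilde\mu)}$.

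However, there is a genuine gap in your second step. After the $\CF_2$-cancellation the correct intermediate expression is $\Fz(\mu)\,[u^{k+1}w^{k+1}]\,\langle\mu|\,e^{\alpha_{-1}}\CA^*(w,uw)\,|0\rangle$, which depends on \emph{two} variables with a joint coefficient extraction. You replace this by $\Fz(\mu)\,[w^{k+1}]\,\langle\mu|\,e^{\alpha_{-1}}\CA^*(w,w)\,|0\rangle$, i.e.\ you set $u=1$ and extract a single power of $w$. These are not equivalent: after commuting $\alpha_{\tilde\mu_j}$ through $\CA^*$, the matrix element takes the form $g(uw,w)=S(uw)^w\,\varsigma(uw)^{|\tilde\mu|-1}(w+1)_{|\tilde\mu|}^{-1}\prod_j\varsigma(\tilde\mu_juw)$, and the joint extraction $[u^{k+1}w^{k+1}]g(uw,w)$ equals $[x^{k+1}]g(x,0)$, \emph{not} $[w^{k+1}]g(w,w)$. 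The $w\to 0$ specialisation of the second argument is precisely what turns $(w+1)_{|\tilde\mu|}$ into $|\tilde\mu|!$ and kills the $S(uw)^w$ prefactor, producing the clean coefficient $\rho_{k,\tilde\mu}$. With your $u=1$ substitution those factors survive and the final identification with $\rho_{k,\tilde\mu}$ — which you flag as ``the main obstacle'' — would in fact fail. The two-variable structure must be retained all the way through the slide, and the specialisation performed only at the very end, as the paper does.
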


\begin{proof} Let 
\[	
I_{\eta}(u;\tau_k)=	\sum_{g} I_{g,\eta}(\tau_k) u^{2g-2}.
\]	
By operator expressions in Section \ref{operatorexpression} and the definition of $\tau_k^d$ from Lemma \ref{raw}, the coefficient of $\tau_k^d$ at $(\mu)\in \CZ(d)$ takes the following form:
\begin{align*}
[(\mu)]\tau^d_k&=\Fz(\mu)\sum_{\eta}[u^{k-d-\ell(\eta)}]\mathrm{Hur}^{\BP^1}(\mu,\eta,e^{-u(2)})\cdot I_{\eta}(u;\tau_k) \\
&= \Fz(\mu)[u^{k+1}w^{k+1}] \sum_{\eta} \sum_{\lambda\vdash d}   \langle \mu | e^{-\CF_2u}v_\lambda  \rangle \cdot \langle v_\lambda| \eta\rangle \Fz(\eta) \langle\eta\ | e^{u\CF_2}e^{\alpha_{-1}}\CA^*(w,uw) \rangle.
\end{align*}
This admits further simplification as follows,
\begin{align*}	
[(\mu)]\tau^d_k &= \Fz(\mu)[u^{k+1}w^{k+1}] \sum_{\lambda\vdash d} \langle \mu | e^{-\CF_2u}v_\lambda  \rangle  \cdot   \langle v_\lambda | e^{u\CF_2}e^{\alpha_{-1}}\CA^*(w,uw) \rangle\\
&=\Fz(\mu)[u^{k+1}w^{k+1}] \sum_{\lambda\vdash d}  \langle \mu  | v_\lambda\rangle \cdot \langle v_\lambda | e^{\alpha_{-1}}\CA^*(w,uw) \rangle\\
&=\Fz(\mu) [u^{k+1}w^{k+1}] \langle\mu|  e^{\alpha_{-1}}\CA^*(w,uw) \rangle\\
&= [u^{k+1}w^{k+1}] \left\langle\prod\alpha_{\mu_j} e^{\alpha_{-1}}\CA^*(w,uw) \right\rangle.
\end{align*}			
We slide $e^{\alpha_{-1}}$ to the left  by applying the bosonic commutation relation, and then apply  $
\left[\alpha_k,\CE_l(z)\right] = \varsigma(kz)\CE_{k+l}(z)$, this gives us that the expression above is equal to
\begin{align*}
& \hspace{0.4cm}\sum_{\tilde{\mu}\leq \mu} \binom{m_1(\mu)}{m_1(\tilde{\mu})}[u^{k+1}w^{k+1}] \left\langle\prod\alpha_{\tilde{\mu}_j}\CA^*(w,uw) \right\rangle\\
&= \sum_{\tilde{\mu}\leq \mu} \binom{m_1(\mu)}{m_1(\tilde{\mu})}[u^{k+1}w^{k+1}] S(uw)^w\frac{\varsigma(uw)^{\lvert\tilde{\mu}\rvert}}{(w+1)_{\lvert\tilde{\mu}\rvert}}\left\langle\prod\alpha_{\tilde{\mu}_j} \CE_{-\lvert\tilde{\mu}\rvert}(uw)\right\rangle\\
&= \sum_{\tilde{\mu}\leq \mu} \binom{m_1(\mu)}{m_1(\tilde{\mu})}[u^{k+1}w^{k+1}] S(uw)^w\frac{\varsigma(uw)^{\lvert\tilde{\mu}\rvert-1}}{(w+1)_{\lvert\tilde{\mu}\rvert}}\prod\varsigma(\tilde{\mu}_j uw).
\end{align*}
Now observe that the function involved in this expression  is of the form $f(uw,w)$ for some $f(x,y)$. Since we are interested in the coefficient of $u^iw^i$ in its Taylor expansion, we can take the specialisation $f(x,0)$.  This leaves us with
\begin{align*}
& \hspace{0.4cm}\sum_{\tilde{\mu}\leq\mu} \binom{m_1(\mu)}{m_1(\tilde{\mu})}[x^{k+1}] \frac{\varsigma(x)^{\lvert\tilde{\mu}\rvert-1}}{\lvert\tilde{\mu}\rvert!}\prod\varsigma(\tilde{\mu}_j x)\\
&= \sum_{\tilde{\mu}\leq \mu} \binom{m_1(\mu)}{m_1(\tilde{\mu})}\frac{\prod \tilde{\mu}_j}{\lvert\tilde{\mu}\rvert!}[x^{k+2-\lvert\tilde{\mu}\rvert-\ell(\tilde{\mu})}] S(x)^{\lvert\tilde{\mu}\rvert-1}\prod S(\tilde{\mu}_j x)
\end{align*}
as desired.
\end{proof}
\begin{remark}
Our expression of $\tau_k^d$ follows a slightly different convention.  While in \cite{OPcom}, the authors allow to insert partitions of $d'$ such that $d'\leq d$ by adding parts $\{1\}$ to a partition, we insert only partitions of $d$ with extra coefficients that depend on the number of parts $\{1\}$ in a partition. Also, our coefficients  $\rho_{k,\mu}$ are related to the coefficeints $\rho^{\mathrm{OP}}_{k, \mu}$ of \cite{OPcom}  by the formula $\rho_{k,\mu}=\rho^{\mathrm{OP}}_{k+1,\mu}/k!$
	\end{remark}
%We now try to get rid of the $e^{u\CF_2}$ factor after which we compute the vacuum expectation. For this we recall
%	\[
%\left[\CE_k(z),\CE_l(w)\right] = \varsigma\left(\det\left[\begin{smallmatrix}
%	k & z\\ l & w
%	\end{smallmatrix}\right]\right) \CE_{k+l}(z+w)
%	\]
%	and as a result we have 
%	\[
%	\left[\CF_2,\CE_k^{(n)}(w)\right] = [z^2]\left[\CE_0(z),\CE_k(w)\right] = -[z^2]\varsigma(kw)\CE_k(z+w) = -k\CE_k^{(n+1)}(w)
%	\]
%	and so
%	\[
%	e^{-u\CF_2}\CE_k(z)e^{u\CF_2} = \sum_{n\geq 0} (-k)^n\CE_k^{(n)}(z)\frac{u^n}{n!} = \CE_k(z+ku)
%	\]
%	hence also
%	\[
%	e^{-u\CF_2}e^{\alpha_1}e^{u\CF_2} = e^{\CE_1(u)}.
%	\]
%	This means that one can slide $e^{u\CF_2}$ all the way to the left and get
%	\[
%	\mu_{\eta}(\tau_k(\pt)) = \text{prefact}\cdot u^{-l(\eta)-d-1}\sum_{n=0}^d\frac{1}{n!}[z^k]S(uz)^z \frac{\varsigma(uz)^{d-n}}{(z+1)_{d-n}} \langle \CE_{d-n}((u+d-n)z) \CE_1(u)^n\prod_{i}\alpha_{-\eta_i}\rangle
%	\]
%	Using
%	\[
%	\left[\alpha_k,\CE_l(z)\right] = \varsigma(kz)\CE_{k+l}(z)
%	\]

\appendix

\section{Computational verification of Theorem \ \\ by Johannes Schmitt} \label{AppendixJohannes}

\subsection{Implementation and methodology}
This appendix presents computational evidence for Theorem \ref{maintheorem} in the case where the target is rubber $\mathbb{P}^1$ relative to two points, with no insertions $\gamma$ and trivial insertion $\alpha=1$. In this setting, the Gromov-Witten cycle equals the double-ramification cycle $\mathsf{DR}_g(A)$, where $A = (\mu_1, -\mu_2)$ with $\mu$ of length $m=2$ specifying the relative conditions over $0$ and $\infty$.

The computational verification was implemented using the \texttt{admcycles} package \cite{admcycles}, with code available at 
\begin{center} \href{https://gitlab.com/modulispaces/admcycles/-/tree/GW_admissible_correspondence}{gitlab.com/modulispaces/admcycles/-/tree/GW\_admissible\_correspondence}
\end{center}
The double-ramification cycles have been explicitly calculated following \cite{JPPZ}.
For the right-hand side of Theorem \ref{maintheorem}:
\begin{itemize}
	\item For maps of degree $d=1$, the  Hurwitz cycles $\mathsf{H}_{\underline{\mu},\underline{\eta}}$ are given by fundamental classes of the corresponding moduli spaces of stable curves.
	\item For maps of degree $d=2$, they are hyperelliptic cycles $\mathsf{Hyp}_{g,w+2, 2v }$ with  $w$ marked Weierstrass points and $v$ pairs of hyperelliptic conjugate points. These were calculated using the algorithm described in \cite{SvZ}.
\end{itemize}

\subsection{Results}

Table \ref{tab:verification} presents the computational verification results. For each genus $g$ and ramification profile $\underline{\mu}$, we computed both sides of the equation in Theorem \ref{maintheorem} and confirmed their equality. The table shows the number of non-zero summands in the graph sum on the right-hand side of the theorem, along with the rank of the tautological cohomology group $R^g(\Mbar^\circ_{g,n})$ where the equality holds.

\begin{table}[ht]
	\centering
	\begin{tabular}{cccc}
		\hline
		& & & \vspace{-0.4cm}  \\ 
			 $g$ & $\underline{\mu}$ & \# non-zero summands & rank $R^g(\Mbar^\circ_{g,n})$\vspace{0.1cm}  \\
		\hline 
		1 & $(1), (1)$ & 1 & 2 \\
		2 & $(1), (1)$ & 2 & 14 \\
		3 & $(1), (1)$ & 3 & 104 \\
		4 & $(1), (1)$ & 5 & 838 \\
		1 & $(2), (2)$ & 3 & 2 \\
		2 & $(2), (2)$ & 9 & 14 \\
		3 & $(2), (2)$ & 27 & 104 \\
		1 & $(2), (1,1)$ & 3 & 5 \\
		2 & $(2), (1,1)$ & 10 & 44 \\
		\hline
		\vspace{0.5cm}
	\end{tabular}
	\caption{Verification of Theorem \ref{maintheorem} for various genera and ramification profiles; for $(g,\underline{\mu}) = (3, (2), (2))$ some insertions at the root vertex of the were calculated manually in terms of $\tilde{\psi}_1 \cdot \pi_*( \mathsf{Hyp}_{g,w+2, 2v }) = \ \tilde{\psi}_1 \cdot \mathsf{Hyp}_{g,w'+2, 2v' }$ plus corrections using the projection formula, to avoid having to calculate the fully Weierstrass marked cycle, e.g.,  $\mathsf{Hyp}_{1,4, 4}$.}\label{tab:verification}\vspace{-0.2cm}
\end{table} 	

All cases verified the equality of the two sides of Theorem \ref{maintheorem}, providing strong computational evidence for the theorem. Many of these equalities occur in tautological cohomology groups of high rank, which makes their agreement particularly compelling.

\subsection{Computational challenges}

Several technical challenges were encountered during implementation:
\begin{itemize}
	\item Moduli spaces of potentially disconnected curves had to be translated into a combinatorial enumeration of all possible connected components.
	\item The insertions of $\psi$-classes at the root vertex must be performed before the (disconnected) gluing map, requiring the calculation of fully-marked hyperelliptic cycles $\mathsf{Hyp}_{g,w+2, 2v }$ whenever there is a $\psi$-insertion at root vertex. These cycles have high codimension and live on moduli spaces with many marked points, making calculations computationally intensive.
\end{itemize}

\subsection{Applications and future directions}

Since the double ramification cycle has an explicit graph-sum formula, Theorem \ref{maintheorem} can in principle be used to calculate admissible cover cycles with two fixed ramification profiles. In particular, this includes the hyperelliptic cycles $\mathsf{Hyp}_{g,2, 0 }$, which currently lack a graph sum formula valid for all genera $g$, though results exist for genus $g=2$ \cite{CT}.

To develop an effective recursion, one would need to express all hyperelliptic cycles $\mathsf{Hyp}_{g_0,w+2, 2v }$ for $g_0<g$ appearing in the root vertex of non-trivial graphs. For $w \leq 2$ and $v=0$, these would be known from previous cases of the recursion, but higher values of $w$ and $v$ would also be needed. These could be calculated iteratively using the formulas from \cite{EH}:
\begin{align*}
	\mathsf{Hyp}_{g_0,w+1, 2v } = D \cdot \pi^*(\mathsf{Hyp}_{g_0,w, 2v }) + (\textsf{boundary correction terms})
\end{align*}
where $D \in R^1(\Mbar^\circ_{g_0, w+1+2v})$ is an explicit divisor class and $\pi$ is the forgetful map of the last marked (Weierstrass) point. Similar formulas exist for increasing the number of hyperelliptic conjugate pairs.

This approach could be valuable both for concrete calculations of hyperelliptic cycles and for exploring graph summation formulas.
\section{Infinite wedge and operators} \label{Appendix}
\subsection{Infinite wedge} In this appendix we recall the necessary background for the operator formalism used in the proof of Theorem \ref{ProofGWH}.
The  source for the material presented here is \cite{OPeq, OPcom}, especially \cite[Section 2]{OPcom}.   

Let 
\[
V = \bigoplus_{k\in \BZ+\frac{1}{2}} \BC\cdot \underline{k}
\]
be a vector space with a basis indexed by half integers $\BZ +\frac{1}{2}$. 
We define
\[
\Lambda^{\frac{\infty}{2}} V = \bigoplus_{S} \BC\cdot v_S
\]
as the vector space with a basis consisting of infinite wedge products
\[
v_S = v_{s_1}\wedge v_{s_2} \wedge v_{s_3} \wedge \ldots 
\]
indexed by $S = \Set{s_1>s_2>\ldots }\subset \BZ+\frac{1}{2}$ satisfying:
\begin{itemize}
\item $S_{-} \coloneqq \{ s \notin S \mid  s \in \BZ_{\leq 0}-\frac{1}{2}\} $ is finite
\item $S_+ \coloneqq \{ s \in S \mid  s \in \BZ_{\geq0 }+\frac{1}{2}\}$ is finite.
\end{itemize}
This space carries an inner product by declaring
\[
(v_S,v_{S'}) = \delta_{S,S'}.
\]

\subsection{Zero charge} Let  $\Lambda_0^{\frac{\infty}{2}} V \subset \Lambda^{\frac{\infty}{2}} V$ be the subspace generated by $v_S$ of zero charge, that is, with $|S_-| = |S_+|$. Such $v_S$ are all of the shape
\begin{equation}
v_\lambda = \underline{\lambda_1 - \tfrac{1}{2}}\wedge \underline{\lambda_2-\tfrac{3}{2}}\wedge \underline{\lambda_3-\tfrac{5}{2}}\wedge \ldots
\end{equation}
for some partition $\lambda=(\lambda_1, \lambda_2, \hdots)$, $\lambda_1\geq \lambda_2 \geq \hdots \geq 0$. The vector associated to the empty partition is the vacuum vector, 
\[
v_\emptyset = \underline{-\tfrac{1}{2}}\wedge \underline{-\tfrac{3}{2}}\wedge\underline{-\tfrac{5}{2}}\wedge\ldots. \]
Let $A$ be an operator acting on $\Lambda_0^{\frac{\infty}{2}}V$, we define the expectation value of $A$ by 
\[ 
\langle A \rangle = (Av_\emptyset, v_\emptyset). 
\]

\subsection{Fermions and bosons} Let $\psi_k$ be the fermionic operator on $\Lambda^{\frac{\infty}{2}}V$ defined by
\[
\psi_k \left(v_S\right) = \underline{k}\wedge v_S, \quad k\in \BZ+\tfrac{1}{2},
\]
and let  $\psi^*_k$ be its adjoint with respect to the pairing $( \ , \ )$. We further define elementary matrix  operators $E_{i,j}$ on $\Lambda^{\frac{\infty}{2}}V$ by 
\[
E_{i,j} = \begin{cases}
\psi_i \psi_j^*, &j>0\\
-\psi_j^* \psi_i, &j<0.
\end{cases}
\]
Using $E_{i,j}$,  we define the  bosonic operators on $ \Lambda^{\frac{\infty}{2}}V$ by
\[\alpha_r= \sum_{k \in \BZ +\frac{1}{2}} E_{k-r, k}, \quad r \in \BZ.\]
Operators $E_{i,j}$ and $\alpha_r$ preserve the subspace $\Lambda_0^{\frac{\infty}{2}}V $, and $\alpha_r$ give rise to bosons associated a partition $\eta$, 
\[ |\eta \rangle= \frac{1}{\Fz(\eta)}\prod \alpha_{-\eta_j} v_\emptyset. \]
Bosons can expressed in terms of fermions using the  Murnaghan--Nakayama rule for characters of the symmetric group, 
\begin{equation} \label{FB}
|\eta \rangle= \sum_{\lambda\vdash |\eta|} \chi_\eta^\lambda v_\lambda. 
\end{equation}

\subsection{Operators} Let 
\begin{align*}
\varsigma(x) &= e^{x/2}-e^{-x/2}\\
S(x) &= \frac{\varsigma(x)}{x} = \frac{\sinh(x/2)}{x/2},
\end{align*}
and let 
\[
(a+1)_k = \frac{(a+k)!}{a!}=\begin{cases}
	(a+1)\dots(a+k), \text{ if } k\geq 0\\
	\left(a(a-1)\dots (a+k+1)\right)^{-1}, \text{ if } k< 0
	\end{cases}
\]
be the Pochhammer symbol. 
%\begin{cases}
%	(a+1)(a+2)\dots(a+k), \quad \hspace{0.08cm} \text{ if } k\geq 0\\
%	\left(a(a-1)\dots (a+k+1)\right)^{-1}, \text{ if } k\leq 0
%\end{cases}

We define the following operators on $\Lambda_0^{\frac{\infty}{2}}V$,
\begin{align*}
\CE_r(x) &= \sum_{k\in \BZ + \frac{1}{2}} e^{x(k-\frac{r}{2})} E_{k-r,k} + \frac{\delta_{k,0}}{\varsigma(x)} \\
\CA(a,b) &= S(b)^a \sum_{k\in \BZ}\frac{\varsigma(b)^k}{(a+1)_k} \CE_k(b)\\
\CF_2 &= \sum_{k\in \BZ+\frac{1}{2}} \frac{k^2}{2} E_{k,k}.
\end{align*}
Observe that the adjoint of $\CA(a,b)$ with respect to the pairing on $\Lambda_0^{\frac{\infty}{2}}V$ is of the following form:
\[
\CA^*(a,b) = S(b)^a\sum_{k\in\BZ} \frac{\varsigma(b)^k}{(a+1)_k}\CE_{-k}(b),
\]
and that $\CF_2 $ is diagonal with respect to the basis given by $v_S$, 

\[
\CF_2 v_S= f_2(S) v_S,
\]
where 
\[
f_2(S)=\sum _{ k\in S_+}\frac{k^2}{2} -   \sum _{ k\in S_-}\frac{k^2}{2},
\] 
which is equal to $f_{2}(\lambda)$ from (\ref{flambda}) for the partition $\lambda$ associated to $S$.

\bibliographystyle{amsalpha}
\bibliography{StableAdmissible}

%Furthermore, there is a formula for Hurwitz numbers, which reads:
%\[
%\mathrm{Hur}_g^{C}(\eta^1,\ldots,\eta^n) = \sum_{\lambda\vdash d} \left(\frac{\dim\lambda}{d!}\right)^{2-2g(X)}\prod_{i=1}^n f_{\eta^i}(\lambda)
%\]
%where 
%\[
%f_\eta(\lambda) = \frac{d!}{\Fz(\eta)}\frac{\chi_{\eta}^\lambda}{\dim\lambda}.
%\]

\end{document}